\newcommand{\dsubseteq}{\mathrel{
\rotatebox[origin=c]{-45}{$\subseteq$}}
}
\newcommand{\ddsubseteq}{\mathrel{
\rotatebox[origin=c]{45}{$\supseteq$}}
}
\numberwithin{equation}{section}
\theoremstyle{plain}
\long\def\comment#1{}
\newtheorem{theorem}{Theorem}
\theoremstyle{definition}
\newtheorem{definition}{Definition}
\newtheorem{remark}{Comment}[section]
\newcommand{\be}{\begin{eqnarray}}
\newcommand{\ee}{\end{eqnarray}}
\newcommand{\ba}{\begin{array}}
\newcommand{\ea}{\end{array}}
\newcommand{\bs}{\begin{align}\begin{split}\nonumber}
\newcommand{\bsnumber}{\begin{align}\begin{split}}
\newcommand{\es}{\end{split}\end{align}}
\renewcommand{\(}{\left(}
\renewcommand{\)}{\right)}
\renewcommand{\hat}{\widehat}
\newcommand{\B}{\mathcal{B}}
\newcommand{\Ep}{{\mathrm{E}}}
\renewcommand{\Pr}{{\mathrm{P}}}
\def\x{{x}}
\renewcommand{\hat}{\widehat}
\renewcommand{\leq}{\leqslant}
\renewcommand{\geq}{\geqslant}
\begin{document}

\begin{frontmatter}
\title{Targeted Undersmoothing}
\runtitle{\ \ Targeted Undersmoothing}

\begin{aug}
\author{\fnms{Christian} \snm{Hansen}\ead[label=e1]{chansen1@chicagobooth.edu}}

\thankstext{t1}{First version:  August 2016.  This version is of  \today.  Sanjog Misra would like to thank the Kilts Center for
Marketing at the University of Chicago Booth School of Business and the
Neubauer Family Foundation for their financial support of this research. Christian Hansen would like to thank the National Science Foundation as well as The University of Chicago Booth School of Business for financial support of this research.  Damian Kozbur would like to thank The University of Z\"urich for financial support of this research.  In addition, the authors would like to thank Susan Athey, Alexandre Belloni, Victor Chernozhukov, Whitney Newey, as well as seminar participants at KU Leuven Post-Model Selection Colloquium (2016), The University of St. Gallen, Brigham Young University, Vanderbilt, University of Naples Federico II, Swiss Statistics Seminar (2017), Asian Meetings of the Econometric Society (2017) for helpful comments.}

\runauthor{C. Hansen, D. Kozbur and S. Misra \ \ }

\affiliation{Chicago Booth }

\address{The University of Chicago \\ Booth School of Business \\ 5807 S. Woodlawn, 
Chicago, IL 60637\\
\printead{e1}\\
}

\author{\fnms{Damian} \snm{Kozbur}\ead[label=e2]{damian.kozbur@econ.uzh.ch}}

\affiliation{University of Z\"urich }

\address{ University of Z\"urich \\ Department of Economics \\ Sch\"onberggasse 1, 
8001 Z\"urich\\
\printead{e2}\\
}

\author{\fnms{Sanjog} \snm{Misra}\ead[label=e3]{sanjog.misra@chicagobooth.edu}}

\affiliation{Chicago Booth }

\address{The University of Chicago \\ Booth School of Business \\ 5807 S. Woodlawn, 
Chicago, IL 60637\\
\printead{e3}\\
}

\end{aug}

\begin{abstract} This paper proposes a post-model selection inference procedure, called \textit{targeted undersmoothing}, designed to construct uniformly valid confidence sets for functionals of sparse high-dimensional models, including dense functionals that may depend on many or all elements of the high-dimensional parameter vector.  The confidence sets are based on an initially selected model and two additional models which enlarge the initial model.  By varying the enlargements of the initial model, one can also conduct sensitivity analysis of the strength of empirical conclusions to model selection mistakes in the initial model.  We apply the procedure in two empirical examples: estimating heterogeneous treatment effects in a job training program and estimating profitability from an estimated mailing strategy in a marketing campaign.  We also illustrate the procedure's performance through simulation experiments.

\smallskip

\noindent\textbf{JEL Codes}: C12, C51, C55

\end{abstract}

\begin{keyword}
\kwd{model selection, sparsity, dense functionals, hypothesis testing, sensitivity analysis}
\end{keyword}

\end{frontmatter}


\section{Introduction}

Large, complex data sets, often described by the moniker \textit{big}, have opened new avenues for empirical work in economics and the social sciences. These data can be extremely rich in the sense that they contain information on a large number of variables for each observation. Such high-dimensional settings\footnote{Formally, a high-dimensional setting is an asymptotic frame for a sequence of statistical models where the number of unknown parameters grows at least as quickly as the sample size.} offer many opportunities for empirical researchers to analyze complex phenomena but pose practical and theoretical problems because of the presence of a large number of explanatory variables.

One of the challenges created by data with many available covariates is the specification of the statistical model.  With many available predictors, it is easy to specify a highly-complex model with many parameters to be estimated. Unfortunately, a statistical model with too many parameters is likely to overfit, resulting in both poor out-of-sample predictive performance and poor statistical inference about functionals that depend on the true parameters of the model.  For example, informative inference about parameters in a linear regression model is impossible if the number of explanatory variables is larger than the sample size if one is unwilling to impose additional model structure.    

Regularization - constraining the estimated model to avoid perfectly fitting the sample data - is therefore required for building a useful high-dimensional model.  \textit{Ad hoc} regularization by specifying a low-dimensional parametric model is commonly employed in empirical applications.  There are also a variety of formal regularization devices that provably control over-fitting and produce high-quality forecasts under sensible conditions.  However, regularization may also lead to regularization bias and \textit{underfitting} - fitting a model which misses important features of the phenomenon under study - which also results in poor predictive performance and invalid inference about population objects of interest.  For a systematic overview of high-dimensional methods and related issues, see \cite{elements:book}.

A popular regularizing structure in the statistics and econometrics literature is \textit{sparsity}; see, for a general reference,  \cite{BuhlmannGeer2011}.  Sparsity is a general term for an assumption which states that the true model depends only on a small subset of the unknown parameters.   An example is the sparse linear regression model which is characterized by having many covariates, most of which have zero coefficients.  A \textit{sparse estimator} is an estimator which returns a model in which only a small number of estimated parameters are nonzero.  There are a variety of sensible sparse estimators in the literature.  Leading examples are $\ell_1$-penalized methods such as the lasso estimator of \cite{FF:1993} and \cite{T1996}.\footnote{Alternatives to the lasso estimator with similar properties include the Dantzig selector (see \cite{CandesTao2007}), forward stepwise regression (see \cite{Wang:UltraHighForwardReg},  \cite{Zhang:GreedyLeastSquares}, \cite{Tropp:Greed}, \cite{Submodular:Spectral}, \cite{DK:FSEL} ), SCAD (see \cite{FanLi2001}), and many others.} Many $\ell_1$-penalized methods and related methods have been shown to have good estimation properties with i.i.d. data even when perfect variable selection is not feasible; see, e.g., \cite{CandesTao2007}, \cite{MY2007}, \cite{BickelRitovTsybakov2009}, \cite{horowitz:lasso}, and the references therein.  Results for $\ell_1$ methods beyond simple i.i.d. data structures\footnote{See, for instance, \cite{BellChenChernHans:nonGauss} and \cite{BCHK:Panel}.} also suggest that this type of regularization has fairly general applicability.  Lasso is also useful as an input into a \textit{post-model selection estimator} where statistical estimation is performed using a model selected through some statistical device; examples include \cite{BC-PostLASSO}, \cite{BellChenChernHans:nonGauss}, \cite{BCH-PLM}, \cite{BCHK:Panel}, \cite{BCFH:Policy}.  
This paper studies constructing inferential quantities, such as confidence intervals, for functionals of unknown model parameters in high-dimensional settings under sparsity assumptions.

The use of regularization is problematic for statistical inference and construction of confidence sets.   Confidence intervals for parameters in models which are estimated with a regularized estimator can have extremely distorted coverage probabilities if the regularization is not explicitly taken into account.  Heuristically, the problem with inference arises because the regularized model may not be the true model, e.g. there may be model selection mistakes, which introduces an additional source of uncertainty. The difficulty of performing inference following regularized estimation has been documented formally by \cite{MR2422862} and \cite{potscher} among others.  As a result, development of valid post-regularization inferential procedures is an important area of current research. 
 
A leading case for which positive results regarding construction of uniformly valid inferential statements after regularization are available is for inference about low-dimensional sets of pre-specified coefficients in sparse linear regression models.  Methods available in this setting include post-double selection, as in \cite{BCH-PLM},  or debiasing, as in \cite{vdGBRD:AsymptoticConfidenceSets} and \cite{ZhangZhang:CI}.  In each of these cases, the model of interest is given by
$$
y_i = x_i' \beta_0 + \varepsilon_i, \ \ s_0 = |support(\beta_0)| < n  
$$
where $i$ indexes observations, $n$ denotes sample size, $y_i$ is an outcome, $x_i$ are covariates, $\varepsilon_i$ are idiosyncratic disturbance terms, and $\beta_0$ is an unknown parameter to be estimated with $support(\beta_0) = S_0$ and $s_0 = |S_0|$ .   The goal in these papers is then to construct a confidence interval for the simple linear functional 
$$
a(\beta_0) = [\beta_0]_1,
$$ 
where $[ \ \cdot \ ]_1$ denotes the first component of a vector.\footnote{Approaches in this setting can easily be extended to accommodate the case where the object of interest is a known, small finite-dimensional subset of the full parameter vector.} Such inferential results have been extended to various settings, including panel data (see \cite{BCHK:Panel}), various nonparametric settings (see \cite{BCFH:Policy}, \cite{AddSep}), settings with generalized linear models (see \cite{Farrell20151}, \cite{BCFH:Policy}), and quantile regression (see \cite{BCFH:Policy}).  The ideas in the \cite{BCH-PLM} can also be generalized to estimation of parameters defined by moment conditions whenever appropriate sparsity conditions hold and Neyman orthogonalizations of the moment conditions are available; see for example \cite{Victor:Chris:Martin:ARE}, \cite{DML}, and references therein.  In addition, \cite{CCK:AOS:2013}, \cite{CCK:AOP:2017} and \cite{Buehlmann:etal:bootstrap:2016} describe how bootstrapping can be used in conjunction with some of the previously cited techniques.  These bootstrapping techniques also allow control of family-wise error rates for a large number of hypothesis tests.  It is worth noting that in all of these procedures, in addition to sparsity in the equation of interest,  additional assumptions regarding sparsity of the relationships between the covariates are required.

The purpose of this paper is to propose and analyze a simple post-model-selection inferential procedure, \textit{targeted undersmoothing}, which is applicable for inference about $\vartheta_0$ defined by a general class of functionals $$\vartheta_0 = a(\Pr_0),$$ under a single sparsity condition on the model of interest with data generating process $\Pr_0$.  Importantly, the class of functionals we consider may be dense, in the sense that they depend non-trivially on the entire high-dimensional parameter vector, may depend on the process generating the observations $(x_i,y_i)$, and may correspond to objects that are not $\sqrt{n}$ estimable. Examples of such functionals are (i) the conditional mean of $Y$ at a particular point $X = x_0$, $x_0'\beta_0$, in a linear model and (ii) a heterogeneous treatment effect for an individual given a high-dimensional vector of characteristics of that individual.  More generally, the approach we propose provides a procedure that may be used to obtain inferential statements about a large class of functionals that are of interest to economists such as marginal effects, elasticities, and counterfactual quantities of interest such as profits and welfare.

Our proposal is to form confidence sets for $\vartheta_0$ by starting with a typical confidence interval obtained from an initially selected model and then systematically enlarging the interval by perturbing the model to account for possible model selection mistakes.  More formally, our proposed confidence set is constructed as the union of standard statistical confidence sets based on the convex hull of $CI( \hat S^{up}) \cup CI( \hat S^{low})$, where $CI(S)$ denotes a confidence region for $\vartheta_0$ based on a model $S$ under the assumption that $S$ is the correct model.  $\hat S^{up}$ and $\hat S^{low}$ are in turn models selected from the data based on
\begin{itemize}
\item [1.] An initially selected model $\hat S^0$ chosen via a standard method targeting model fit to the data.
\item [2.] Two additionally selected models: an upper model $\hat S^{up} \supseteq \hat S^0$ and a lower model, $\hat S^{low} \supseteq \hat S^0$ chosen by respectively targeting worst-case upper and lower bounds on the functional of interest that can be achieved by small augmentations to the model $\hat S^0$.
 \end{itemize}
 
In practice, the initial model selection is performed with a standard high dimensional estimator like lasso.  The subsequent model selection steps depend on the functional of interest and target the behavior of that functional accommodating model selection mistakes made in the first step.  The subsequent steps are important since mistakes are inherent to all model selection procedures unless unrealistic conditions are imposed on the formal setting.\footnote{Such conditions include $\beta$-min conditions, which assert that nonzero unknown parameters must be bounded uniformly away from zero in absolute value.}  In this paper, when discussing model selection mistakes, we mean variables $j \in S_0$ such that $j \notin \hat S^0$.  Note that model selection mistakes are captured by the set $S_0 \setminus \hat S^0$.  We let $\hat s$ denote $\hat s = |\hat S^0|$ and $\delta_{\hat s}$ denote $\delta_{\hat s} = | S_0 \setminus \hat S^0|$.  We make the strong but important assumption that the researcher has a known upper bound, $\overline s$,  on the number of possible model selection mistakes, $\overline s \geq \delta_{\hat s}$.  

We note that the properties of $\delta_{\hat s}$ for a given model selection procedure like lasso may be difficult to calculate.   A second option for choosing $\bar s$ exists when a researcher is willing to assume a value for $s_0$ but is unwilling to make assumptions about $\delta_{\hat s}$.  In this case, a simple and valid choice for $\bar{s}$ is $\bar{s} = s_0$.  Note that by construction, $\delta_{\hat s} \leq s_0$ which immediately gives $\bar s \geq \delta_{\hat s}$.\footnote{In practice, a situation could easily arise where $\hat s > \bar{s}$ if $\bar s$ is taken to be a bound on $s_0$.  This situation can occur because typical bounds on the behavior of lasso imply that $\hat s \leq O(1)s_0$ and not necessarily that $\hat s \leq s_0$; see \cite{BickelRitovTsybakov2009} and other references on lasso cited above.  In light of this possibility, bounds on $\delta_{\hat s}$ may be more desirable in practice even though such bounds depend on random quantities.} 

When constructing $\hat S^{low}$ and $\hat S^{up}$ as above, the two conditions $|\hat S^{low} \setminus \hat S^0 | \leq \bar s$ and $|\hat S^{up} \setminus \hat S^0 | \leq \bar s$ are enforced.  Enforcing these conditions ensures that all involved selected sets are relatively sparse, which is important for good performance in practice, and that, in theory, the second round of selection is sufficient to capture any selection mistakes made in the first step and capture the true model.  

The name `targeted undersmoothing' is motivated by a useful, though informal, heuristic analogy between high-dimensional estimation and nonparametric estimation.  A key problem in nonparametric regression estimation is to choose a bandwidth (for kernel-based estimates) or a set of approximating functions (in series- or sieve-based methods).  Sufficiently small bandwidths and more flexible sets of approximating functions each lead to undersmoothing in estimating the target function in the sense that bias bias may be taken to be small relative to sampling variation.  Undersmoothing can thus be used to justify inference based on correctly-centered Gaussian approximations.  For a review, see \cite{li:racine:book}.  Choosing a bandwidth or set of approximating functions is not unlike choosing a penalty parameter in $\ell_1$-penalized regression where smaller values of the penalty parameter result in more complex models.  

Unfortunately, simply decreasing the penalty parameter in penalized estimation of a sparse high-dimensional model does not alleviate bias in the same way as decreasing a bandwidth in a traditional kernel problem due to the complexity of the model space inherent in high-dimensional problems.  Heuristically, moderate strength signals whose exclusion leads to bias are hard to pick out from among the many irrelevant variables; and as the penalty parameter is lowered beyond theoretically justified levels, it is likely that the first variables to enter the model will be irrelevant signals that happen to be moderately correlated to the outcome in the sample at hand.  In this case, the decrease of the penalty parameter does not alleviate bias by introducing variables with moderate, but non-zero, coefficients that were previously missed and simultaneously introduces a type of endogeneity bias as those irrelevant variables that are introduced are precisely those with the highest correlation to the noise within the current sample.  
Intuitively, the targeted undersmoothing approach addresses this problem by undersmoothing in those directions that seem to be most likely to account for bias by directly focusing on the functional of interest rather than model fit.

Our paper complements several interesting papers that look at similar problems.  The work in \cite{DML} develops general theory for a procedure for inference about a relatively low-dimensional set of prespecified target parameters when machine learning is used to estimate some features of the model under weak conditions.  \cite{Athey:Wager:2015:TE:RF} study asymptotically Gaussian inference for heterogeneous treatment effects using random forests, and the ideas of \cite{Athey:Wager:2015:TE:RF} are extended to other objects of interest in \cite{GradientForests}.  Relative to the present work, the formal results in \cite{Athey:Wager:2015:TE:RF} and \cite{GradientForests} are developed in settings with low-dimensional controls.  \cite{Athey:Imbens:2016} study estimation of heterogeneous treatment effects in conjunction with machine learning; see also \cite{Athey:Imbens:etal:TE:PandP}.  Inference in \cite{Athey:Imbens:2016} relies on tree-based methods and sample-splitting where part of the sample is used to learn the splitting rule for the tree and the other part of the sample is used to do inference for heterogeneous treatment effects conditional on the tree learned in the first subsample.  \cite{Athey:Imbens:Wager:ApproxResidRebalancing} perform residual rebalancing to estimate average treatment effects with high dimensional control variables when regression equations are given by sparse linear models under very weak restrictions on the propensity score model that include cases where the propensity score does not have a natural sparse representation.  \cite{Cai:Guo:qBalls} consider construction of confidence sets for dense functionals given by $a(\beta) = \| \beta \|_l$ for various $1 \leq l \leq \infty$.  Perhaps the most closely related current papers are \cite{ZhuBradic:linear} and \cite{Zhu:Bradic:general:high:dim:testing}.  Both \cite{ZhuBradic:linear} and \cite{Zhu:Bradic:general:high:dim:testing} construct hypothesis tests for objects similar to those considered in our paper via $\ell_1$-projections of coefficient estimates to the set of coefficients consistent with the null. \cite{ZhuBradic:linear} only considers linear functionals while \cite{Zhu:Bradic:general:high:dim:testing} considers general nonlinear functionals but imposes stronger sparsity conditions than those employed below.  We compare the performance of the tests in \cite{ZhuBradic:linear} to inference based on our proposed targeted undersmoothing procedure in the simulation section of this paper.


This paper also complements recent work in \textit{selective inference}.   Selective inference refers to inferential techniques for parameters $\beta_{0,S}$ which depends on a model $S$.  The goal is to approximate the sampling distribution of an estimated $\hat \beta _{\hat S} | \hat S$; i.e. to approximate the distribution of an estimator conditional on the selected model $\hat S$.  See, for instance, \cite{exact:post:sel:inference:lasso}, which carries out selective inference in the high-dimensional linear model in the case that $\hat S$ is chosen using lasso.  Selective inference is a sensible analytic tool for assessing uncertainty about model parameters when the selected model will be fixed and utilized for subsequent applications.  Targeted undersmoothing and selective inference are designed for different objectives.  Targeted undersmoothing aims to deliver inferential statements about objects of interest as defined in the population model rather than the values of these objects after conditioning on a selected model.

The need to specify $\bar s$ is a limitation of our proposed method.  However, this limitation is not unique to this paper.  Approaches to undersmoothing in the traditional nonparametric literature also rely on \textit{ad hoc} decisions about exactly what one means by sufficiently small bandwidth or sufficiently flexible set of approximating functions, for example.  With few exceptions, high-dimensional estimators perform well under sparsity assumptions, and perform poorly when sparsity fails.\footnote{See for instance, \cite{rjive}, which allows more instruments than observations but does not impose sparsity in the first stage.}  Furthermore, to the best of the authors' knowledge, there are currently no reliable tests for the violation of sparsity in the statistics or econometrics literature.     

Given the dependence of the proposed procedure to the \textit{ad hoc} choice of $\bar{s}$, we feel that the proposed approach will be most helpful when viewed through the lens of sensitivity analysis.  Specifically, one may look at how confidence regions for objects of interest change as one varies $\bar{s}$ over sensible values, for example, $\bar{s}  \in \{0,1,...,\bar{s}^*\}$.  Because the exercise starts with a model selected through a high-quality model selection procedure, setting $\bar{s} = 0$ corresponds to this procedure producing no model selection mistakes which happens in scenarios where oracle model selection is possible; see \cite{FanLi2001}, \cite{Zou2006} , \cite{BuneaTsybakovWegkamp2007b}.  As one then considers increasing $\bar{s}$, one is considering scenarios where the initial selector is allowed to have made increasingly many selection mistakes.  By looking at several values for $\bar{s}$, one thus gains insight into how sensitive conclusions are to the number of model selection mistakes made by the initial selector.  This approach is similar to applications of sensitivity analysis in treatment effects estimation where a variety of approaches to sensitivity analysis exist for gauging sensitivity of causal estimators to violations of underlying identifying assumptions; see, for example, \cite{rosenbaum:book} and \cite{manski:book} for textbook reviews of classic approaches.

Two examples give an illustration of the targeted undersmoothing procedure.  The first example studies heterogeneous treatment effects in the Job Trainings Partnership Act of 1982.  The second example studies expected profit from individually-targeted advertising strategies derived from estimates of heterogeneous treatment effects.   In the first example we find that under mild assumptions on the sparsity level, it is not possible to reject the null hypothesis that the individual-specific heterogeneous treatment effect is zero for most individuals.   However, we reject the null hypothesis of no heterogeneity fairly robustly, even though we cannot pin down individual effects reliably.  By contrast, in the advertising example, we see that the confidence intervals for the parameters we estimate are relatively robust to different assumptions about the true underlying sparsity level.  We find strong evidence suggesting heterogenous responses of individuals to direct mail advertising.  We also find strong evidence that strategic mailing to individuals based on their characteristics yields substantially higher profits than either of two simple fixed mailing strategies we consider.

Finally, the paper presents a simulation study.  The simulation design is motivated by the direct mailing marketing campaign example.  An interesting feature of the simulation study is that using $\bar s = 1$ is sufficient for producing correct coverage probabilities in almost all designs, even when $s_0 > 1$ and as large as 16.  We find that procedures which make use of model selection but rely on perfect model recovery may have seriously distorted coverage, confirming previous results in the literature.  

\section{Preliminaries: Rates of Convergence for Estimated Functionals of High-Dimensional Sparse Models}\label{sec: rates}

This section serves as a preliminary to the main proposed inferential procedure by formally deriving some simple convergence rates for estimators of various classes of functionals based on a model chosen with a formal model selection procedure.  These results verify that estimators of even dense functionals based on sparse, post-model-selection estimators may have favorable statistical properties, though they do not deliver a formal inferential procedure.  In Section \ref{sec: main}, we give a procedure for constructing confidence regions around the estimates described in the present section.

\subsection{Framework}
Throughout, we simply write $D, \beta_0, p, q, k, s_0, \Pr, \textbf{F}$, etc, excluding $n$ from the notation.  Operations throughout the analysis are performed for each $n$. In the asymptotic analysis, all objects should be understood to belong to sequences - $ \{D_n\}_{n=1}^\infty, \{\beta_{0,n}\}_{n=1}^\infty, \{p_n\}_{n=1}^\infty, \{s_{0,n}\}_{n=1}^{\infty}, \{ \Pr_n\}_{n=1}^\infty,\{ \textbf{F}_n\}_{n=1}^\infty$, etc - each indexed by $n$.

For a sample size $n$, consider a dataset
$$
D = (z_i)_{i=1}^n
$$
which is a random sample jointly distributed according to a distribution $\Pr_{0}$  supported on some subset $\textbf{F} \subseteq \mathbb R^{n \times q}$.
The random variables $z_i \in \mathbb R^{q}$ are the observations and are indexed by $i=1,...,n$ for sample size $n$.  
Recall the classical definition of a statistical model is a set $\mathscr P = \{ \Pr \}$ of distributions $\Pr$ on $\textbf{F}$.  The statistical model is well-specified if $\Pr_0 \in \mathscr P$.

Often times it is convenient to associate a parameter to the set $\mathscr P$.  Here, we consider an association $\beta \mapsto \mathscr P_{\beta}$, where $\beta \in \B \subseteq \mathbb R^p$ and $\mathscr P_\beta \subseteq \mathscr P$.  Therefore, each value of $\beta$ associates to a subset of the statistical model.  We assume that $\cup_{\beta \in \B} \mathscr P_\beta = \mathscr P$ and that $\beta_0 \mapsto \mathscr P_0 \ni \Pr_0$. 

When the context is clear and there is no chance for confusion, we abuse notation slightly.  In discussing probabilities of events $\textbf{G} \subseteq \textbf{F}$, we write $\Pr(D \in \textbf{G})$ to mean $\Pr_0(D \in \textbf{F})$.  I.e.  probabilities, unless otherwise noted, are always taken with respect to the measure $\Pr_0$ of the data generating process.  This reduces clutter in the presentation.

We are primarily interested in high-dimensional applications where $p$ is large compared to $n$ and thus assume sparsity:  we maintain that only a small subset of the components of $\beta_0$ are nonzero.  We set $S_0 = support(\beta_0)$ and we define $s_0 = | S_0 |$, the number of nonzero components of the vector $\beta_0$.\footnote{The setting and results in this paper can be extended to the case that $\beta_0$ can be decomposed into a sparse component and a small component, so that $\beta_0 = \beta_0^{(1)} + \beta_0^{(2)}$, $|support(\beta_0^{(1)})| \leq s_0$, $\| \beta_0^{(2)} \|_2 \rightarrow 0$.}    In this setting, it is natural to consider estimators of $\beta_0$ which are based on model selection.

\begin{definition} A model selection procedure is defined by a map $\textbf M: \textbf{F} \rightarrow 2^{\{1,...,p\}} .$ In addition, a model-based estimator is a map $\textbf{b}: 2^{\{1,...,p\}}\times \textbf F \rightarrow \mathbb R^p$ such that for $K \subset \{1,...,p\}$ and $D \in \textbf F$, $support(b(K,D))\subset K$.  The composition  $\textbf{b} \circ (\textbf{M}, \text{id}_\textbf{F}) $, where  $\text{id}_{\textbf{F}}$  is the identity, $\text{id}_{\textbf{F}}(x) = x$, defines a post-model selection estimator  $D \mapsto \hat \beta$.  
\end{definition}

It is convenient to define a notion of high dimensional convergence, which depends on $s_0$, $p$, and $n$.  Let $\hat \beta$ be any measurable estimator $\textbf{F} \rightarrow \mathbb R^p$.  Let $\hat S$ denote the support of $\hat \beta$, $\hat S = support(\hat \beta)$, and let $\hat s = | \hat S|$ denote the number of nonzero elements of $\hat \beta$.  We define $\| \cdot \|_2$ to be the Euclidean norm, and $\| \cdot \|_{2,n}$ to be the $n^{-1/2}$-normalized Euclidean norm on $\mathbb R^n$.  The following definition is not standard, but useful in our discussion.  

\

\begin{definition} The sequence $(\textbf b, \textbf M)$, or more generally $\hat \beta$, is high-dimensionally consistent over a class of sequences $\mathscr D = \{\mathscr P\}$ if $\hat s= O(1)s_{0} $ with probability $1-o(1)$ and
$\| \hat \beta - \beta_{0} \|_2 = O_\Pr\left( \sqrt{{s_{0} \log p}/{n}}\right)$, uniformly over $\mathscr D$.  We abbreviate this by writing $(\textbf b, \textbf M) \in \textbf{ U}(\mathscr D)$ or $\hat \beta \in \textbf{ U}(\mathscr D)$.
\end{definition}

Existence of estimators $\hat \beta \in \textbf U(\mathscr D)$ will be taken as a given high level condition.  Many such estimators have been proposed and analyzed in the literature;  see, for example, the textbook \cite{BuhlmannGeer2011} and references contained there.  Since our interest in this paper is on inference for functionals, we do not restate sets of low-level conditions for specific estimators for brevity.  Rather, we focus on understanding the extent to which sparse estimators that satisfy Definition 3 can be used to reliably estimate large classes of functionals of the unknown parameter and the observed data. 

The choice to consider only estimators featuring the $ \sqrt{{s_0 \log p}/{n}}$ rate comes at a slight loss of generality, in favor of being concrete.  Most standard high dimensional estimators will achieve the above rates.  In other cases, the arguments can be easily adapted.

The next two subsections discuss estimation and statistical inference for general post-model-selection estimation techniques.   Researchers are often interested in a functional of a statistical model.  In economics, common examples of functionals of interest are average treatment effects, heterogeneous treatment effects, demand elasticities, etc.  An advantage of post-model-selection estimators is that the same selected model can be used to estimate a wide range of functionals.

\subsection{Explicitly defined functionals}\label{explicit-functionals}

In this first example, we consider functionals $a: \mathbb R^q \times \mathbb R^p \rightarrow \mathbb R$ which may depend on $D=(z_i)_{i=1}^n$ and $\beta$.  We consider the entire collection $\{ \vartheta_{0,i} \}_{i=1}^n = \{ a(z_i,  \beta_0) \}_{i=1}^n$, and we will be interested in understanding how well  $a(z_i, \hat \beta)$ approximates $a(z_i, \beta_0)$ in the $\| \cdot \|_{2,n}$ norm.

Define the following notion of linearizable which will be useful in establishing the next theorem.

\begin{definition} \textit{Linearization of $a$.}  For each $z$, there is $da(z): \mathbb R^p\rightarrow \mathbb R$, linear, and $c_a(z) \in \mathbb R$ such that for every $\beta\in \mathbb R^p$ 
we have
$$| a(z, \beta) - a(z, \beta_0) - d a(z)'( \beta-\beta_0) |\leq c_a(z) \| \beta- \beta_0 \|_2.$$

\end{definition}

\noindent In addition, define $A_a$ by the matrix $$A_a=\frac{1}{n} \sum_{i=1}^n da(z_i)da(z_i)'$$
and, for a set $K \subset \{1,...,p\}$, set $\phi_{\max }(K)(A_a)$ to be the largest eigenvalue of the principal submatrix of $A_a$ corresponding to the index set $K$.

\

\begin{theorem}
Suppose $\hat \beta \in {\normalfont \textbf{U}}(\mathscr D)$. Suppose further that $a$ are in a sequence of functionals which satisfy {Definition 3}.  Then
$$\frac{\| a(z_i, \hat \beta) - a(z_i , \beta_0)\|_{2,n}}{ \|c_a(z_i) \|_{2,n} +  [\phi_{\max}(\hat S \cup S)(A_a)]^{1/2}}   =  O_\Pr \(  \sqrt{\frac{s_0 \log p}{n}} \ \).$$
\end{theorem}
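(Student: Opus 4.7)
The plan is straightforward: apply the linearization bound from Definition 4 pointwise, then control the resulting linear and remainder pieces in the empirical $\|\cdot\|_{2,n}$ norm using the eigenvalue of $A_a$ and the high-dimensional consistency rate for $\hat\beta$ from $\textbf{U}(\mathscr D)$.

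First I would pass to the pointwise bound
\[
|a(z_i,\hat\beta)-a(z_i,\beta_0)|\leq |da(z_i)'(\hat\beta-\beta_0)|+c_a(z_i)\|\hat\beta-\beta_0\|_2
\]
for each $i=1,\dots,n$, which is immediate from Definition 4. Squaring, averaging over $i$, taking square roots, and applying the triangle inequality in $\|\cdot\|_{2,n}$ yields
\[
\|a(z_i,\hat\beta)-a(z_i,\beta_0)\|_{2,n}\leq \|da(z_i)'(\hat\beta-\beta_0)\|_{2,n}+\|c_a(z_i)\|_{2,n}\,\|\hat\beta-\beta_0\|_2,
\]
where in the second summand I use that $\|\hat\beta-\beta_0\|_2$ is a deterministic (nonrandom-in-$i$) scalar and pulls out of the empirical norm.

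Second, I would rewrite the leading linear term as a quadratic form in $A_a$: by the definition of $A_a$,
\[
\|da(z_i)'(\hat\beta-\beta_0)\|_{2,n}^{2}=(\hat\beta-\beta_0)'A_a(\hat\beta-\beta_0).
\]
Because both $\beta_0$ and $\hat\beta$ are supported on $S_0\cup\hat S$, the vector $\hat\beta-\beta_0$ lives in the coordinate subspace indexed by $\hat S\cup S_0$, so this quadratic form is bounded by $\phi_{\max}(\hat S\cup S_0)(A_a)\,\|\hat\beta-\beta_0\|_2^{2}$, giving
\[
\|da(z_i)'(\hat\beta-\beta_0)\|_{2,n}\leq [\phi_{\max}(\hat S\cup S_0)(A_a)]^{1/2}\,\|\hat\beta-\beta_0\|_2.
\]
Combining with the previous display yields
\[
\|a(z_i,\hat\beta)-a(z_i,\beta_0)\|_{2,n}\leq \bigl(\|c_a(z_i)\|_{2,n}+[\phi_{\max}(\hat S\cup S_0)(A_a)]^{1/2}\bigr)\|\hat\beta-\beta_0\|_2.
\]

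Finally, $(\textbf b,\textbf M)\in\textbf{U}(\mathscr D)$ gives $\|\hat\beta-\beta_0\|_2=O_\Pr(\sqrt{s_0\log p/n})$ uniformly over $\mathscr D$, so dividing both sides by $\|c_a(z_i)\|_{2,n}+[\phi_{\max}(\hat S\cup S_0)(A_a)]^{1/2}$ gives the claimed rate. There is no real obstacle here beyond bookkeeping; the one subtle point worth stating explicitly in the write-up is the support argument that restricts $\hat\beta-\beta_0$ to $\hat S\cup S_0$, which is what licenses replacing the unrestricted maximum eigenvalue of $A_a$ by the sparse eigenvalue $\phi_{\max}(\hat S\cup S_0)(A_a)$ and is the reason the sparsity level $\hat s=O(1)s_0$ guaranteed by $\textbf{U}(\mathscr D)$ enters only implicitly.
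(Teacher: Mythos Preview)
Your proof is correct and follows essentially the same route as the paper: linearize pointwise, split via the triangle inequality in $\|\cdot\|_{2,n}$, bound the linear piece by the sparse eigenvalue of $A_a$ over $\hat S\cup S_0$, and invoke $\hat\beta\in\textbf{U}(\mathscr D)$ for the rate. If anything, your write-up is more careful than the paper's, which omits the explicit support argument and drops the square root on $\phi_{\max}$ in the intermediate bound.
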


\

\begin{proof}  $\| a(z_i,\beta_0) - a(z_i,\hat \beta)\|_{2,n} \leq \| da(z_i)'(\beta_0 - \hat \beta) \|_{2,n} + \| c_a(z_i) \| \beta_0 - \hat \beta \|_2 \|_{2,n}$.   The first term is bounded by $\| \beta_0 - \hat \beta \|_2 \phi_{\max}(\hat S \cup S)(A_a)$.  The second term is bounded by $ \|c_a(z_i)\|_{2,n} \|\hat \beta - \beta_0 \|_2$.  Noting that $\| \beta_0 - \hat \beta \|_2 = O_\Pr[(s_0 \log p /n)^{1/2}]$ completes the proof. \end{proof}

\

When $a(z_i,\beta)$ is uniformly linearizable in the sense that $\max_{i \leq n} c_a(z_i) = O_{\Pr}(1) $, and does not blow up over subsets $K$ in the sense that $\max_{|K| \leq Cs_0} \phi_{\max}(K)(A_a) = O_{\Pr}(1)$ for $C$ sufficiently large, then the convergence rates simplify to ${\| a(z_i, \hat \beta) - a(z_i , \beta_0)\|_{2,n}}   =  O_\Pr (  \sqrt{{s_0 \log p}/{n}} \ ).$

When $a(z_i,\beta) = z_i'\beta$, then $c_a(z_i) = 0$ and $A_a = \frac{1}{n} \sum_{i=1}^n z_i z_i'$. The quantities $\max_{|K| \leq Cs_0} \phi_{\max}(K)(A_a)$ are known as maximal sparse eigenvalues. Under mild conditions on $z_i$, (see \cite{BickelRitovTsybakov2009}, \cite{BellChenChernHans:nonGauss}), the relevant sparse eigenvalues can be bounded by $O_{\Pr}(1)$.  In this case, the convergence rate $O_\Pr (  \sqrt{{s_0 \log p}/{n}} \ )$ is attained from Theorem 1.  Another application relevant to the empirical examples below is of estimating heterogeneous treatment effects.  Suppose $\beta_0$ can be partitioned into $\beta_0 = ([\beta_0]_E, [\beta_0]_H)$ with the two components giving  individual characteristic effects and characteristic-by-treatment interaction effects.  Both empirical illustrations below have such structure. Then if $a(z_i,\beta) = [z_i]_H ' [\beta]_H$, a consequence of the above theorem is $\| [z_i]_{H}' [\hat \beta]_H - [z_i]_{H}' [\beta_0]_H \|_{2,n} = O_\Pr(\sqrt{s_0 \log p /n} \ )$.

\subsection{Implicitly defined functionals}

The next theorem considers a different class of functionals of the parameter $\beta$.  We express the target in the context of m-estimators, following e.g. \cite{newey:semiparametric:1994}  and \cite{Victor:Chris:Martin:ARE}.  We focus on estimation of $\vartheta_0 \in \mathbb R$. We assume that $\vartheta_0$ is defined as a solution to moment conditions given by a function $\psi(z,\vartheta, \beta)$, which takes values in $\mathbb R$.\footnote{Extension to the setting where $\vartheta_0$ and $\psi(z,\vartheta,\beta)$ are finite dimensional vectors with $\dim(\vartheta_0) \leq \dim(\psi(z,\vartheta,\beta))$ is trivial, but requires additional notation.}  Explicitly, we assume our parameters $(\vartheta_0,\beta_0)$ are defined as a solution to

$$\frac{1}{n} \sum_{i=1}^n \Ep \psi(z_i,\vartheta_0,\beta_0) =0. $$

One sensible estimator $\hat \vartheta$ is obtained by using a plug-in $\hat \beta$, calculated in a previous estimation step.  Then $\hat \vartheta$ is defined via the sample moment:

$$\hat \vartheta \in \underset {\vartheta \in \mathcal A} {\text{argmin} } \left \| \frac{1}{n} \sum_{i=1}^n  \psi (z_i, \vartheta,\hat \beta) \right \|_2 $$

\noindent for some compact set $\mathcal A \subseteq \mathbb R$ which does not depend on $n$ and which contains $\vartheta_0$.  In the development below, we simplify notation and write $$ m(\vartheta,\beta) = \frac{1}{n} \sum_{i=1}^n \Ep[\psi(z_i,\vartheta, \beta)],  \ \ \hat { m}(\vartheta,\beta) = \frac{1}{n} \sum_{i=1}^n \psi(z_i,\vartheta, \beta).$$

We impose regularity conditions on the functions $m(\vartheta, \beta)$ and $\hat m(\vartheta, \beta)$ below before giving the rates of convergence for $\hat \vartheta$ estimated according to the above method.  

\begin{definition} Define the following sets centered around $(\vartheta_0, \beta_0)$ relative to sequences $(t,v,k)=(t_n,v_n,k_n)$:
$$\mathcal A_t := \{ \vartheta:| \vartheta - \vartheta_0 | \leq t \}$$
$$\mathcal B_{v,k} := \{ \beta: \| \beta - \beta_0 \|_2 \leq v\sqrt{s_0 \log p /n} \} \cap  \{ \beta: | support( \beta - \beta_0 ) | \leq  ks_0 \}$$
\end{definition}

\begin{definition} Linearization of $ m$. For each $ \vartheta \in \mathcal A_t$ there is $c_{ m}( \vartheta) \in \mathbb R$ and $d   m(\vartheta): \mathbb R^p \rightarrow \mathbb R$, linear, such that for every $(\vartheta,\beta) \in \mathcal A_t \times \mathcal B_{v,k}$, we have

$$|  m( \vartheta, \beta_0) -  m(\vartheta, \beta) - d  m(\vartheta)'(  \beta_0-\beta) |\leq c_{ m}( \vartheta) \| \beta_0 - \beta \|_2^2$$
and $c_{m}(\vartheta) = O(1)$ uniformly over $\mathcal A_t.$  
\end{definition}

\begin{definition} Uniform Stochastic Equicontinuity.  We have the following bound uniformly over $\mathcal A_t \times \mathcal B_{v,k}$

$$ \| m(\vartheta, \beta) - \hat m(\vartheta, \beta) \|_2 =O_\Pr(n^{-1/2}).$$

\end{definition}

\begin{definition} Identifiability.  Let $\Gamma_0 =  \frac{\partial}{\partial \vartheta}    m(\vartheta_0, \beta_0).$  The parameter $\vartheta$ is identifiable if $\Gamma_0$ exists and $$2\left \|  m(\vartheta_0, \beta_0) \right \|_2 \geq \min \left (  \|\Gamma_0'(\vartheta_0 - \vartheta)\|_2 , \ \iota^{-1} \right ), \ \lambda_{\min}(\Gamma_0' \Gamma_0) \geq \iota^{-1}$$ for all $\vartheta \in \mathcal A_t$ for some sequence $\iota=O(1)$.
\end{definition}

High level conditions like those captured in Definitions 4-7 are routinely used in m-estimation problems and can be established under a variety of primitive conditions.  Definition 4 simply defines appropriate local neighborhoods to the true parameters $\vartheta_0$ and $\beta_0$ for use in Definitions 5 and 6. Definition 6 defines a linearization of the ``population'' objective function $m(\vartheta,\beta)$.  This is a relatively weak condition which importantly does not require that $\hat{m}(\vartheta,\beta)$ is smooth. Definition 7 provides a uniform law of large numbers.  This condition can also be shown under weaker stochastic equicontinuity conditions like those in \cite{newey:semiparametric:1994} with additional assumption on the data generating process (like independent observations). For example, if $\hat m(\vartheta, \beta)$ is smooth with probability 1, then $d \hat m(\vartheta)$ can be defined analogously to $d m(\vartheta)$ above.  In this case, the statement in the definition of stochastic equicontinuity given in Definition 6 follows under the following three conditions: (1) a classical stochastic equicontinuity assumption, $\| dm(\vartheta)'(\beta_0 -  \beta) - d \hat m(\vartheta)'(\beta_0 - \beta)\|_2 = o(n^{-1/2})$; (2) a condition on the quality of linearization where $c_m(\vartheta) = O(1)$ and $c_{\hat m}(\vartheta) = O(1)$ for $c_{\hat m}(\vartheta)$ defined analogously to $c_{m}(\vartheta)$; and (3) a uniform law of large numbers over $\mathcal A_t$ where $\|m(\vartheta, \beta_0) - \hat m(\vartheta, \beta_0) \|_2 = O_\Pr(n^{-1/2})$.  Definition 7 ensures that given knowledge of the data generating process, $\vartheta_0$ is uniquely defined.

Finally, let $[v]_j$ denote the $j^{\text{th}}$ component of a vector $v\in \mathbb R^p$. For a set $K \subset \{1,...,p\}$, let $[v]_K$ denote a vector with components $[v]_j, j\in K$.  

\

\begin{theorem}
 Consider $\hat \beta \in {\normalfont \textbf{U}}(\mathscr D)$.  Suppose the conditions on the sets $\mathcal A_t, \mathcal B_{v,k}$ given in Definition 4 are met with $\min(t,v,k) \rightarrow \infty$.  Suppose that $m$ satisfies Definitions 5-7.  Then for $\hat \vartheta$ defined above, 
$$|\hat \vartheta - \vartheta_0|  =  O_\Pr \left (\max_{K \subseteq \{1,...,p\}: |K| \leq  ks_0, \ \vartheta \in \mathcal A_t} \left  \| [d  {\normalfont  {m}} (\vartheta)]_K \right \|_{2}  \sqrt{\frac{s_0 \log p}{n}} \ \right )$$
\end{theorem}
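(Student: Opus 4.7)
The plan is to follow a standard M-estimation argument: bound the population moment $|m(\hat\vartheta,\beta_0)|$ and then invert that bound through the identifiability condition of Definition 7. The observation that makes the high-dimensional setting tractable is that $\hat\beta\in\textbf{U}(\mathscr D)$ bundles an $\ell_2$-rate with a sparsity bound $\hat s = O(1)s_0$. Consequently $K := \supp(\hat\beta-\beta_0)$ satisfies $|K| \leq \hat s + s_0 = O(1)s_0 \leq k s_0$ eventually, since $k\to\infty$, and Cauchy–Schwarz gives
$$|dm(\vartheta)'(\hat\beta-\beta_0)| = |[dm(\vartheta)]_K'[\hat\beta-\beta_0]_K| \leq \Phi_n\,\|\hat\beta-\beta_0\|_2$$
uniformly in $\vartheta\in\mathcal A_t$, where $\Phi_n := \max_{|K'|\leq ks_0,\,\vartheta\in\mathcal A_t}\|[dm(\vartheta)]_{K'}\|_2$ is precisely the sparse-index quantity appearing in the theorem. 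On the same event of probability $1-o(1)$ one also has $\hat\vartheta\in\mathcal A_t$ (compactness of $\mathcal A$ together with $t\to\infty$ forces $\mathcal A\subseteq\mathcal A_t$ eventually) and $\hat\beta\in\mathcal B_{v,k}$, so Definitions 5–7 apply at both $\vartheta_0$ and $\hat\vartheta$.

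Next, I would decompose $|\hat m(\vartheta_0,\hat\beta)|$ via the triangle inequality as
$$|\hat m(\vartheta_0,\hat\beta)-m(\vartheta_0,\hat\beta)| + |m(\vartheta_0,\hat\beta)-m(\vartheta_0,\beta_0)| + |m(\vartheta_0,\beta_0)|.$$
The first term is $O_\Pr(n^{-1/2})$ by Definition 6, the third vanishes by the defining moment condition on $(\vartheta_0,\beta_0)$, and the middle term is bounded via Definition 5 by $|dm(\vartheta_0)'(\hat\beta-\beta_0)| + c_m(\vartheta_0)\|\hat\beta-\beta_0\|_2^2 = O_\Pr(\Phi_n\sqrt{s_0\log p/n}) + O_\Pr(s_0\log p/n)$, using the Cauchy–Schwarz bound above and $c_m(\vartheta_0)=O(1)$. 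Using the optimality $|\hat m(\hat\vartheta,\hat\beta)|\leq|\hat m(\vartheta_0,\hat\beta)|$ and applying the same style of triangle-inequality decomposition centered at $\hat\vartheta$ (with $c_m(\hat\vartheta)=O(1)$ on $\mathcal A_t$ and Definition 6 at $\hat\vartheta$) yields
$$|m(\hat\vartheta,\beta_0)| = O_\Pr\!\Bigl(\Phi_n\sqrt{s_0\log p/n} + s_0\log p/n + n^{-1/2}\Bigr),$$
where the first summand dominates under the standard background conditions $s_0\log p/n\to 0$ and $\Phi_n$ bounded below.

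Finally I would invert via Definition 7. Since the right-hand side above is $o_\Pr(1)$, eventually $|m(\hat\vartheta,\beta_0)| < \iota^{-1}/2$, so the minimum in the identifiability inequality is attained by the $|\Gamma_0(\vartheta_0-\hat\vartheta)|$ branch, giving $|\Gamma_0|\cdot|\hat\vartheta-\vartheta_0|\leq 2|m(\hat\vartheta,\beta_0)|$; combining with $\Gamma_0^2=\lambda_{\min}(\Gamma_0'\Gamma_0)\geq\iota^{-1}$ delivers the stated rate for $|\hat\vartheta-\vartheta_0|$. The main obstacle is the sparsity bookkeeping in the first paragraph: without the bundled bound $\hat s = O(1)s_0$ from $\textbf{U}(\mathscr D)$, Cauchy–Schwarz would force $\Phi_n$ to be replaced by $\|dm(\vartheta)\|_2$, a full $p$-dimensional sum that may diverge as $p\to\infty$. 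Simultaneously tracking the linear term $\Phi_n\sqrt{s_0\log p/n}$, the quadratic linearization remainder, the equicontinuity error $n^{-1/2}$, and the support-size constraint $|K|\leq ks_0$ is what isolates the sparse-eigenvalue-type quantity $\Phi_n$ as the correct driver of the rate.
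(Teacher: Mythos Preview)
Your proposal is correct and follows essentially the same route as the paper: both arguments bound $|m(\hat\vartheta,\beta_0)|$ via the three-term triangle decomposition $|m(\hat\vartheta,\beta_0)-m(\hat\vartheta,\hat\beta)|+|m(\hat\vartheta,\hat\beta)-\hat m(\hat\vartheta,\hat\beta)|+|\hat m(\hat\vartheta,\hat\beta)|$, control the last term via optimality $|\hat m(\hat\vartheta,\hat\beta)|\leq|\hat m(\vartheta_0,\hat\beta)|$ followed by another application of equicontinuity and linearization, and then invert through the identifiability condition. Your write-up is in fact slightly more explicit than the paper's on two points the paper leaves implicit: the Cauchy--Schwarz/sparsity bookkeeping that isolates $\Phi_n$, and the side conditions ($s_0\log p/n\to 0$, $\Phi_n$ bounded below) needed for the linear term $\Phi_n\sqrt{s_0\log p/n}$ to dominate the quadratic remainder and the $n^{-1/2}$ equicontinuity error.
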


\

\begin{proof}
  
Note, for $n$ sufficiently large, $\hat \vartheta \in \mathscr A_t$ since $\mathscr A_t \supseteq \mathscr A$.  Let $r$ be the rate given in the statement of the theorem.  By the identifiability assumption, we have that for any $\delta>0$, 

$$\Pr( \| \vartheta_0 - \hat \vartheta \|_2 > \delta) \leq \Pr \left ( \| m(\hat \vartheta,  \beta_0) \|_2 \geq  \frac{ \min( \sqrt \iota \delta, \iota )}{2} \right ).$$
It therefore suffices to show that  $\| m(\hat \vartheta,  \beta_0) \|_2 <  O_\Pr(r)$.  By the triangle inequality, we have that 
$$\|m(\hat \vartheta,  \beta_0) \|_2 \leq I_1 + I_2 + I_3$$
where we define $I_1:= \| m(\hat \vartheta, \beta_0) - m(\hat \vartheta , \hat \beta) \|_2$, 
$ I_2:= \|  m(\hat \vartheta, \hat \beta) - \hat m(\hat \vartheta ,\hat \beta) \|_2 $, and
$I_3:= \| \hat m(\hat \vartheta, \hat \beta ) \|_2 $.  $I_1$ is $O_\Pr(r)$ by linearity (applying the Cauchy-Schwarz inequality to the $(\hat \beta - \beta_0)$ term in the linearization.)  Second, $I_2$ is $O_\Pr(n^{-1/2})$ by the uniform law of large numbers that follows from the imposed conditions.
Finally, by construction of the estimator, we have $I_3 = \| \hat m(\hat \vartheta, \hat \beta) \|_2 \leq \| \hat m( \vartheta_0, \hat \beta )\|_2$.  Application of the uniform law of large numbers gives $\| \hat m(\vartheta_0, \hat \beta) \|_2 \leq O_{\Pr}(n^{-1/2}) +\| m(\vartheta_0, \hat \beta)\|_2$.  Application of linearization gives $\| m(\vartheta_0, \hat \beta)\|_2 =O_{\Pr}(r)$.

\end{proof}

When the functional $\vartheta$ of interest is linear in $\beta$, then $\vartheta = \xi' \beta$ for some $\xi \in \mathbb R^p$.  In this case, we can set $\psi(z, \vartheta, \beta) = \vartheta - \xi'\beta$.  This gives $dm(\vartheta) = \xi$, and $c_m(\vartheta) = 0$.  Furthermore, $ \left  \| [d  {\normalfont  {m}} (\vartheta)]_K \right \|_{2} = \| [\xi]_K \|_2 $.  In this sense, the size of the vector $\xi$ is directly related to the calculated rate of convergence.   Note that a point forecast in a linear model is an example of this case.

Specializing further to the case that $\vartheta= [\beta]_1$, note $\xi$ has only a single nonzero component.  In this case, $ \left  \| [d  {\normalfont  {m}} (\vartheta)]_K \right \|_{2} =1$ for every $K$ containing the element 1.  The corresponding rate of convergence is $\sqrt{s_0 \log p/n}$.  This rate is slower than the parametric rate of $1/\sqrt{n}$.  Note that under certain regularity conditions, like those described in \cite{BCH-PLM}, $[\beta_0]_1$ can be estimated at the parametric rate.  

Despite the slower rates of convergence in some situations, the estimates described above do have the desirable property of simplicity.  The simplicity becomes more desirable when $\vartheta_0$ is more complicated than a linear functional.  In the simulation section of this paper, we compare estimators of $[\beta_0]_1$ using both the plug-in estimate described above, as well as a procedure based on \cite{BCH-PLM} to quantify any potential loss in estimation quality in certain finite sample settings.

\section{Targeted Undersmoothing as an Inferential Procedure}\label{sec: main}
The previous sections show that many functionals of interest can be calculated accurately from a single estimated high dimensional model.  In this section, we consider inference for functionals $\vartheta_0 = a(\Pr_0)$.\footnote{In Section \ref{explicit-functionals}, we also considered an entire profile $\{\vartheta_{0,i}\}_{i=1}^n  = \{ a(z_i, \beta_0)\}_{i=1}^n$.  We note here that we will be able to construct pointwise confidence regions for $\vartheta_{0,i} = a(z_i,\beta_0)$.  Uniform confidence regions would require additional adjustment.}

We make the strong but important assumption that the researcher has a known upper bound, $\overline s$,  on the number of model selection mistakes, defined by $\delta_{\hat s} = |S_0 \setminus \hat S^0|$.  If the researcher has a prior assumption on $s_0$, but is unwilling to to make assumptions on $\delta_{\hat s}$, one may also take $\bar s = s_0$.  Formally, we assume $\bar s \geq \delta_{\hat s}$ with probability $1-o(1)$.  As earlier, we assume that for $K \subset \{1,...,p\}$, there is an estimator $\hat \beta(K)=\textbf{b}(K, D)$ which depends on $K$ and the data $D$.  In addition, assume we can construct for each $K$ with cardinality less than $\bar{s} + \hat s$, an observable random interval $[\ell_K, u_K]$ which will cover $\vartheta_0$ with a desired pre-specified frequency if $K = support(\beta_0) = S_0$.  In other words, we maintain that the true model is relatively low-dimensional and that, if told the exact form of the true model, we could construct valid inferential statements for the object of interest conditional on estimating the true model.

Given these assumptions, we can define the following inferential procedure:

\smallskip

\noindent \textit{Algorithm 1.  }{Targeted Undersmoothing.}

\noindent \textbf{Step 1.}  Select a model $\hat S^{0}$ by a fixed model selection procedure \textbf{M}. 

\noindent \textbf{Step 2.}  For each $K$, let $[\ell_K, u_K]$ be an associated random interval.  Select $$\hat S^{\text{low}} = \underset{K: \hat S^{\text{0} } \subseteq K \subseteq [p]: |K \setminus \hat S^0 | \leq \overline s}{\text{argmin}}\ell_K$$

$$\hat S^{\text{up}} = \underset{K: \hat S^{0} \subseteq K \subseteq [p]: |K \setminus \hat S^0 | \leq \overline s}{\text{argmax}} u_K$$

\noindent \textbf{Step 3.} Set $[\ell,u] = [\ell_{\hat S^{\text{low}}}, u_{\hat S^{\text{lup}}}]$

\

 {Algorithm 1 takes an initially selected model and then searches for deviations that include that model and add no more than $\bar{s}$ extra variables.  To choose how to add variables, we do not look at model fit but rather which deviation leads to the largest change in inferential statements about the parameter of interest.  In the case of a confidence interval, we do this separately for the upper and lower bound of the interval.  This formulation intuitively conservatively captures the worst-case impact of up to $\bar{s}$ model selection mistakes on inference for the target quantity.}
 Figure \ref{TUFull} gives a schematic representation of the model selection timeline corresponding to Algorithm 1.

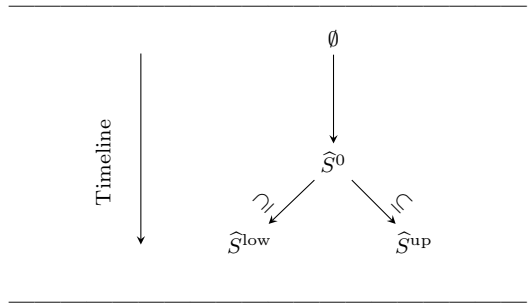
\begin{figure}[ht]
\caption{Targeted Undersmoothing: Schematic Diagram }
\center \textbf{------------------------------------------------------------}

\begin{tikzpicture}
\label{TUFull}
 \matrix (m) [matrix of math nodes, row sep=2em,
    column sep=1.5em]{
   \text{ }  & & \emptyset &  \\
   & & & \\
    & & \hat S^{0}  &   \\
   \hspace{1cm} & \hat S^{\text{low}}  & & \hat S^{\text{up}} \\};
  \path[-stealth]
    (m-1-3) 
            edge (m-3-3)
    (m-3-3) edge node [right ]  { \ $\dsubseteq$}  (m-4-4)
            edge node [left] {$\ddsubseteq$ \ }  (m-4-2)
    (m-1-1) edge node [left, rotate=90,yshift = 5mm, xshift = 5mm] {Timeline}  (m-4-1); 
\end{tikzpicture}
\center \textbf{------------------------------------------------------------}
\end{figure}
In order to give a formal result describing the properties of the targeted undersmoothing procedure, define the following simple condition:
 
 \
 
\begin{definition}
The intervals $[\ell_K,u_K]$, $K \in \mathscr K$, have uniform coverage probability $\alpha$ over $\mathscr K$ if 
$$\liminf_{n \rightarrow \infty} \inf_{K \in \mathscr K: S_0 \subseteq K} \inf_{\Pr_0 \in \mathscr P \in \mathscr D}  \Pr( \vartheta_0 \in [\ell_K, u_K]) \geq 1-\alpha.$$
\end{definition}

\begin{theorem}
\textit{Consider Algorithm 1.}    
Suppose that the intervals $[\ell_K, u_K]$ have uniform coverage probability $\alpha$ over $\mathscr K = \{ K: S_0 \subseteq K, |K \setminus \hat S^0| \leq \bar s\}$.   In addition, the sparsity bound $\overline s$ satisfies $\overline s \geq |S_0|$.   Then 
$$\liminf_{n \rightarrow \infty}  \inf_{\Pr_0 \in \mathscr P \in \mathscr D}  \Pr(\vartheta_0 \in [\ell, u]) \geq 1-\alpha.$$

\end{theorem}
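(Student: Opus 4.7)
The plan is to exhibit a single (random) set $K^\star \in \mathscr K$ such that $[\ell_{K^\star}, u_{K^\star}]$ both covers $\vartheta_0$ with asymptotic probability at least $1-\alpha$ and is contained in the targeted undersmoothing interval $[\ell, u]$. The proof then reduces to a two-line bookkeeping exercise combining the uniform coverage hypothesis with the argmin/argmax definitions of $\hat S^{\text{low}}$ and $\hat S^{\text{up}}$.

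The natural candidate is $K^\star := \hat S^0 \cup S_0$. First I would verify that $K^\star \in \mathscr K$. By construction $S_0 \subseteq K^\star$, and $K^\star \setminus \hat S^0 = S_0 \setminus \hat S^0$, so $|K^\star \setminus \hat S^0| \leq |S_0| \leq \bar s$ by the hypothesis $\bar s \geq |S_0|$. Thus $K^\star$ satisfies the feasibility constraint in the optimization problems defining $\hat S^{\text{low}}$ and $\hat S^{\text{up}}$, and also $S_0 \subseteq K^\star$, so $K^\star$ lies in the subset of $\mathscr K$ over which uniform coverage is assumed.

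Next, by the argmin/argmax definitions in Step 2 of Algorithm 1, feasibility of $K^\star$ gives the pointwise inequalities
\begin{equation*}
\ell_{\hat S^{\text{low}}} \;=\; \min_{K \in \mathscr K} \ell_K \;\leq\; \ell_{K^\star}, \qquad u_{\hat S^{\text{up}}} \;=\; \max_{K \in \mathscr K} u_K \;\geq\; u_{K^\star}.
\end{equation*}
Hence $[\ell_{K^\star}, u_{K^\star}] \subseteq [\ell_{\hat S^{\text{low}}}, u_{\hat S^{\text{up}}}] = [\ell, u]$, so the event $\{\vartheta_0 \in [\ell_{K^\star}, u_{K^\star}]\}$ is contained in the event $\{\vartheta_0 \in [\ell, u]\}$.

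Finally, I would apply the uniform coverage hypothesis from Definition 8, specialized to $K = K^\star$, which lies in the set $\{K \in \mathscr K : S_0 \subseteq K\}$ over which the uniform infimum is taken. This yields
\begin{equation*}
\liminf_{n \to \infty} \inf_{\Pr_0 \in \mathscr P \in \mathscr D} \Pr\bigl(\vartheta_0 \in [\ell, u]\bigr) \;\geq\; \liminf_{n \to \infty} \inf_{\Pr_0 \in \mathscr P \in \mathscr D} \Pr\bigl(\vartheta_0 \in [\ell_{K^\star}, u_{K^\star}]\bigr) \;\geq\; 1 - \alpha,
\end{equation*}
which is the claim. The only potential subtlety, and what I would treat as the main step to check carefully, is that the uniform coverage property in Definition 8 is stated as a uniform infimum over $K \in \mathscr K$ with $S_0 \subseteq K$; since $K^\star$ is random, one must invoke this as a uniform-in-$K$ statement rather than pointwise-in-$K$, but this is exactly why the hypothesis is phrased that way and no further work is needed.
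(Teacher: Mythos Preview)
Your proposal is correct and follows exactly the same approach as the paper: exhibit $K^\star = \hat S^0 \cup S_0$, check it is feasible for the Step~2 optimization, conclude $[\ell_{K^\star}, u_{K^\star}] \subseteq [\ell, u]$, and invoke uniform coverage. One small notational slip: the feasible set in Step~2 is $\{K : \hat S^0 \subseteq K,\ |K \setminus \hat S^0| \leq \bar s\}$, not $\mathscr K = \{K : S_0 \subseteq K,\ |K \setminus \hat S^0| \leq \bar s\}$ as you wrote in the displayed min/max, but since you correctly verified $\hat S^0 \subseteq K^\star$ the argument is unaffected.
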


\

\begin{proof}
The theorem follows from $ \Pr(\vartheta_0 \in [\ell, u]) \geq \Pr(\vartheta \in [\ell_{S_0 \cup \hat S^0}, u_{S_0 \cup \hat S^0}])$.  The right-hand side has $\liminf$ bounded by $1- \alpha$ by assumption.
\end{proof}

Note that when $\hat S^0$ is given by $\textbf{M}$ for some $(\textbf b, \textbf M) \in \textbf U(\mathscr D)$, then $\mathscr K$ can be taken as deterministic, using $\{ K: S_0 \subseteq K, |K| \leq O(1)s_0 + \bar s\}$, where the $O(1)$ term corresponds to the implied $\hat s \leq O(1)s_0$ bound in the definition of $\textbf{U}(\mathscr D)$.

The high-level assumption that  the intervals $[\ell_K, u_K]$ have uniform coverage probability $\alpha$ over $\mathscr K $ is stronger than the lone assumption that $[\ell_{S_0}, u_{S_0}]$ covers $\vartheta_0$ with probability $1-\alpha$.  Sufficient conditions guaranteeing uniform coverage probability $\alpha$ over $\mathscr K$ are easily stated for special cases like the high-dimensional linear model.  Such conditions are commonly employed in the econometrics literature (see for example \cite{BellChenChernHans:nonGauss}) and are characterized by (1) probabilistic lower and upper bounds on minimal and maximal sparse eigenvalues of the matrix $\frac{1}{n}\sum_{i=1}^nz_iz_i'$, (2) moment conditions on the covariates and residual terms, and (3) rate conditions on $\bar s$ and $p$.  Nevertheless, a result which uses a weaker notion than uniform coverage probability $\alpha$ over $\mathscr K$ could also be desirable.  

The main problem in deriving such a result under weaker conditions stems from the fact that $$\Pr( \vartheta_0 \in [\ell_{S}, u_S] | \ S \ \text{selected } ) \neq  \Pr( \vartheta_0 \in [\ell_{S}, u_S] ).$$  If $S$ is selected and $S$ contains some $j \notin S_0$, then $K \neq S_0$ for each $K \supseteq \hat S$. One way in which this issue can be addressed is if \textbf{M} has the further property that there exists a fixed set $ T \supseteq S_0$ such that $ \Pr ( {\normalfont \textbf{M}}  (D) \cap T^c \neq \emptyset ) = o(1)$ and $\Pr( \vartheta_0 \in [\ell_T, u_T])$ is bounded by $1-\alpha$ asymptotically.  If in addition, the sparsity bound $\overline s$ satisfies $\overline s \geq |T|$, then the statement of the theorem, $\liminf_{n \rightarrow \infty} \Pr(\vartheta_0 \in [\ell, u]) \geq 1-\alpha$, is recovered.
Informally, this condition states that the set of variables which are liable for being falsely selected into $\hat S^{0}$ can be controlled by $\bar s$.\footnote{We conjecture that in linear regression models under irrepresentability conditions on the design matrix, we may take $T = S_0$. However, since $\bar s$ is a user-specified tuning parameter in the first place, we do not follow this line of reasoning in this paper.}

Another procedure avoiding the assumption of uniform coverage probability $\alpha$ over $\mathscr K$ could be constructed by foregoing the initial model selection procedure, and taking $\hat S^0 = \emptyset$.  This would eliminate the problem. However, we note that taking $\hat S^0 = \emptyset$ will consider models which are in no sense local to the true model.  This implies that such a procedure could fail to have power against many fixed alternatives.  

An alternative to the above assumption is to adopt a sample splitting strategy.  We partition the set $\{1,...,n \}$ into a disjoint union $A \sqcup B$ of sets of equal (or approximately equal) size, uniformly at random.  We perform initial model selection on Sample A.  We calculate $\hat S^{\text{low}}, \hat S^{\text{up}}$ using only Sample B.  Formally, we outline the procedure here:

\

\noindent \textit{Algorithm 2.  }{Targeted Undersmoothing with Sample Split.}

\noindent \textbf{Step 0.}  Partition the sample $\{1,...,n\}$ into disjoint sets $A \sqcup B$. 
 
\noindent \textbf{Step 1.}  Select a model $\hat S^{0,A}$ by the model selection procedure $\textbf{M}(D_A)$ where $D_A$ is the data $D$ restricted to the subsample $A$. 

\noindent \textbf{Step 2.}  For each $K$, let $[\ell_K^B, u_K^B]$ be the associated random interval calculated using sample $B$.  Select $$\hat S^{\text{low}} = \underset{K: \hat S^{0,A} \subseteq K \subseteq [p]: |K \setminus \hat S^0 | \leq \overline s}{\text{argmin}}\ell_K^B$$

$$\hat S^{\text{up}} = \underset{K: \hat S^{0,A} \subseteq K \subseteq [p]: |K \setminus \hat S^0 | \leq \overline s }{\text{argmax}} u_K^B$$

\noindent \textbf{Step 3.} Set $[\ell,u] = [\ell_{\hat S^{\text{low}}}, u_{\hat S^{\text{lup}}}].$

\

\

\begin{figure}[ht]
\label{TUSplit}
\caption{Targeted Undersmoothing with Sample Split: Schematic Diagram}

\center  \textbf{------------------------------------------------------------}
\vspace{-1mm}

\begin{tikzpicture}
 \matrix (m) [matrix of math nodes, row sep=2.5em,
    column sep=.5em]{
     \text{ }   & \text{Sample A} & &  \text{Sample B}  &   \\
   \text{ }  & \emptyset & \text{ } & &  \\
   & & & \\
    &  \hat S^{0} &  &  \hat S^{0}  &   \\
   \hspace{1cm} & &   \hat S^{\text{low}}  & & \hat S^{\text{up}} \\};
  \path[-stealth]
    (m-2-2) 
            edge (m-4-2)
    (m-4-4) edge node [right ]  { \ $\dsubseteq$}  (m-5-5)
            edge node [left] {$\ddsubseteq$ \ }  (m-5-3)
    (m-1-1) edge node [left, rotate=90,yshift = 5mm, xshift = 5mm] {Timeline}  (m-5-1) 
    (m-4-2) edge [dashed] (m-4-4);
\end{tikzpicture}
\vspace{-3mm}
\center \textbf{------------------------------------------------------------}

\end{figure}
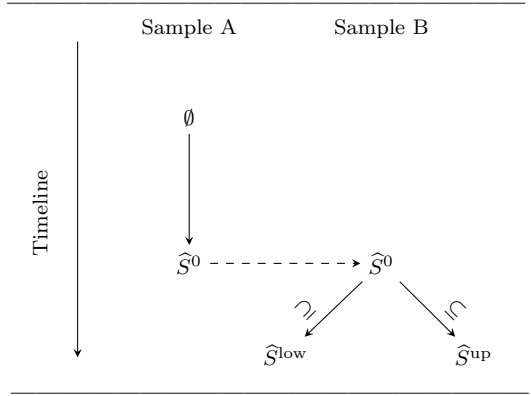

Using this procedure allows the uniform coverage probability assumption discussed above in Definition 8 to be dropped.  Instead, we adopt the following: \begin{definition}
The intervals $[\ell_K,u_K]$, $K \in \mathscr K$, have pointwise coverage probability $\alpha$ over $\mathscr K$ if for sequences $K\in \mathscr K$ such that $S_0 \subseteq K$, $$\liminf_{n \rightarrow \infty}  \inf_{\Pr_0 \in \mathscr P \in \mathscr D} \Pr( \vartheta_0 \in [\ell_K, u_K]) \geq 1-\alpha.$$
\end{definition}

\begin{theorem}
Consider Algorithm 2.  Suppose that the intervals $[\ell_K^B, u_K^B]$ have pointwise coverage probability $\alpha$ over $\mathscr K = \{ K: S_0 \subseteq K, |K \setminus \hat S^0| \leq \bar s\}$.    In addition, the sparsity bound $\overline s$ satisfies $\overline s \geq |S_0|$.    Then 
$$\liminf_{n \rightarrow \infty} \inf_{\Pr_0 \in \mathscr P \in \mathscr D} \Pr(\vartheta_0\in [\ell, u]) \geq 1-\alpha.$$
\end{theorem}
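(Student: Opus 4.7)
The plan is to follow the structure of the proof of Theorem 2 but to exploit the independence of samples $A$ and $B$ in order to accommodate the weaker, pointwise rather than uniform, coverage hypothesis. First, by Step 2 of Algorithm 2, $\hat S^{\text{low}}$ and $\hat S^{\text{up}}$ minimize $\ell_K^B$ and maximize $u_K^B$ respectively over the admissible class $\mathcal{K}(D_A) := \{K \subseteq [p] : \hat S^{0,A} \subseteq K,\ |K \setminus \hat S^{0,A}| \leq \bar s\}$. Setting $K^{\star} := S_0 \cup \hat S^{0,A}$, one has $\hat S^{0,A} \subseteq K^{\star}$ trivially and $|K^{\star} \setminus \hat S^{0,A}| = |S_0 \setminus \hat S^{0,A}| \leq |S_0| \leq \bar s$, so $K^{\star} \in \mathcal{K}(D_A)$. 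Hence $[\ell,u] \supseteq [\ell_{K^{\star}}^B, u_{K^{\star}}^B]$, and it suffices to lower bound $\Pr(\vartheta_0 \in [\ell_{K^{\star}}^B, u_{K^{\star}}^B])$.

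Next I would exploit the sample split. Because $D_A$ and $D_B$ are independent by the random partition, conditioning on $D_A$ leaves the conditional distribution of $D_B$ unchanged and renders $K^{\star}(D_A) = S_0 \cup \hat S^{0,A}(D_A)$ a deterministic set with $S_0 \subseteq K^{\star}(D_A) \in \mathscr K$. For each realization $d_A$, the sequence $(K^{\star}(d_A))_n$ is admissible in the sense of Definition 9, so the pointwise coverage hypothesis supplies a lower bound on the conditional sample-$B$ coverage asymptotically. Integrating this conditional bound over $D_A$, using that the integrand is uniformly bounded in $[0,1]$, should yield
$$
\liminf_{n \to \infty} \inf_{\Pr_0 \in \mathscr P \in \mathscr D} \Pr\bigl(\vartheta_0 \in [\ell_{K^{\star}}^B, u_{K^{\star}}^B]\bigr) \geq 1-\alpha,
$$
from which the conclusion of the theorem follows by the containment above combined with the infimum over $\Pr_0$ already present in Definition 9.

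The main obstacle is the passage from the pointwise-in-sequences statement of Definition 9 to a bound applicable to the \emph{random} sequence $K^{\star}(D_A)$, since naively Definition 9 controls one deterministic sequence at a time whereas different realizations $d_A$ produce different sequences. The resolution is twofold: the random partition renders sample $B$ independent of the index $K^{\star}(D_A)$, which is the key structural gain over Algorithm 1 and Definition 8; and for each fixed $n$ the admissible class $\{K \in \mathscr K : S_0 \subseteq K\}$ is finite, so a short subsequence-and-contradiction argument upgrades the pointwise-in-sequences bound to the uniform bound $\liminf_n \inf_{\Pr_0 \in \mathscr P \in \mathscr D} \inf_{K \in \mathscr K,\, S_0 \subseteq K} \Pr(\vartheta_0 \in [\ell_K^B, u_K^B]) \geq 1-\alpha$. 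Indeed, if this uniform infimum failed along a subsequence $n_k$ with witnesses $K_{n_k}$ and $\Pr_{0,k}$, one could splice these choices into a single sequence violating Definition 9. Applying this uniform bound to $K^{\star}(D_A)$ removes the dependence on $d_A$ and, combined with integration over $D_A$, completes the argument.
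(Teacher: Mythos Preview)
Your argument is correct and follows the same route as the paper's two-sentence proof: exhibit $K^\star = S_0 \cup \hat S^{0,A}$ as an admissible index, deduce $[\ell,u] \supseteq [\ell^B_{K^\star}, u^B_{K^\star}]$, and invoke the independence of the two subsamples to control the coverage of the latter interval. You are more careful than the paper on one point---the subsequence-and-contradiction argument you sketch to pass from the pointwise-in-sequences hypothesis of Definition 9 to a bound applicable to the random index $K^\star(D_A)$---and that step is correct; in fact it shows that Definition 9 already implies Definition 8 for the sample-$B$ intervals, so the substantive gain of Algorithm 2 over Algorithm 1 is the independence of $K^\star$ from the interval endpoints rather than any genuine weakening of the coverage hypothesis.
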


\begin{proof}
The theorem follows from $ \Pr(\vartheta_0 \in [\ell, u]) \geq \Pr(\vartheta_0 \in [\ell_{S_0 \cup \hat S^{0,A}}^B, u_{S_0 \cup \hat S^{0,A}}^B])$.  The right-hand side has $\liminf$ bounded by $1- \alpha$, using the fact that sample $A$ is independent of sample $B$.
\end{proof}

Algorithm 2 will in general produce wider confidence intervals, since it is it constrained to only work with sample $B$ for inference.    In our simulation study, we find that Algorithm 1 gives good coverage probabilities in all of the designs we tried.  

\begin{remark}
In addition to giving a procedure for constructing confidence sets, another use of targeted undersmoothing is for sensitivity analysis.  Theoretical properties of targeted undersmoothing depend on unknown - and to the best of our knowledge unlearnable - $\bar{s}$.  Rather than assuming $\bar s$ is known, trying several values $\bar s \in \{1,...,\bar s^*\}$ allows the researcher to see how sensitive confidence intervals and inference are sensitive to different values $s_0$.  We use this practice in the the empirical examples and the simulation exercises below.
\end{remark}

\begin{remark}
The above proposed algorithm is potentially computationally infeasible with even a moderate number $p$ of explanatory variables.  Therefore, in order to implement the procedure in practice, it may be necessary to approximate the quantities $[\ell,u]$.  

Depending on the exact nature of the problem, different approximations or bounds might be obtained with different methods.  For all of our simulation results and data applications in this paper, we add covariates indexed by $j$ into $\hat S^{(\text{low})}$ and $\hat  S^{(\text{up})}$ according to a simple greedy rule.  To be explicit, we perform the following algorithm:


\

\noindent \textit{Algorithm 3.  }{Greedy Approximation for $\hat S^{\text{low}}, \ \hat S^{\text{up}}$.}

\noindent \textbf{Initialize}: $\hat K^{\text{low}},  \hat K^{\text{up}} = \emptyset$

\noindent \textbf{While} $|\hat K^{\text{low}}|, | \hat K^{\text{up}}| \leq \overline s$

\hspace{6mm} \textbf{Set} $\hat j^{\text{low}}  = \text{arg} \min \ell_{\hat S^0  \cup \hat K^{\text{low}}   \cup \{j\} }$

\hspace{6mm} \textbf{Set} $\hat j^{\text{up}}   = \text{arg} \max u_{\hat S^0  \cup \hat K^{\text{up}}   \cup \{j\} }$

\hspace{6mm} \textbf{Set} $ \hat K^{\text{low}} =  \hat K^{\text{low}}  \cup \{\hat j\} $

\hspace{6mm} \textbf{Set} $\hat K^{\text{up}}   =  \hat K^{\text{up}}  \cup \{\hat j\} $

\noindent \textbf{End}

\noindent \textbf{Set} $\hat S^{\text{low}} = \hat S^0  \cup \hat K^{\text{low}}$

\noindent \textbf{Set} $\hat S^{\text{up}} = \hat S^0  \cup \hat K^{\text{up}}$

\

We note that other approximations to $[\ell,u]$ are also possible.  For example, semidefinite relaxations can give relatively quickly computable, valid lower bounds on $\ell$ and upper bounds on $u$ in some cases.  One could also adopt other solution techniques for obtaining approximate solutions to nonlinear integer programming problems.  Further exploration of these options may be useful, though we found the simple greedy algorithm presented above to perform well relative to other options in initial simulations.
\end{remark}

\begin{remark}
It is worth noting that targeted undersmoothing can also be used to carry out hypothesis testing.  This follows directly from the fact that confidence intervals can be constructed from inverted test statistics and vice versa.  Suppose the hypothesis of interest is $H_0: \vartheta_0 = \bar \vartheta$ for a prespecified value $\bar \vartheta$.  Suppose, given a model $S\subseteq \{1,...,p\}$, that $\hat W_S$ is an observable test statistic and that $\hat W_S$ corresponds to a p-value $\hat p_S$.  Then targeted undersmoothing can be used by choosing $\hat S = \hat S^0 \cup \hat K$ and by taking the set $|\hat K | \leq \bar s$  which makes the test most conservative (equivalently maximizing $\hat p_{\hat S}$.)
\end{remark}

\section{Empirical Examples}

In this section, we illustrate the use of targeted undersmoothing in two examples.  First, we study effects of job training programs on wages.  We are interested in estimating heterogeneous treatment effects in a setting where several individual characteristics are observed.  In the second example, we are interested in making individual-specific mailing strategies and estimating the profit gain from such a strategy.

\subsection{Application I:  Heterogeneous Treatment Effects from JPTA}

The impact of job training programs on the earnings of trainees, especially those with low income, is of interest to both policy makers and academic economists.   Evaluating heterogeneous causal effect of training programs on earnings is difficult due to the fact that individual characteristics vary across the sample; it is unlikely that many individuals share exactly the same values of observed covariates.  The problem is made worse the higher the dimension of the collected covariates.   

We consider data available from a randomized training experiment conducted under the Job Training Partnership Act (JTPA).  In the experiment, people were randomly assigned the offer of JTPA training services.  Given the random assignment of the offer of treatment, we focus this exercise on estimating the average treatment effect of the offer of treatment, or the intention to treat effect, conditional of individual characteristics.
In this example, we limit the analysis to the sample of adult males.    

To capture the effects of training on earnings, we estimate a model of the form 
$$
y_i =  x_i'  \beta_0  +  (d_i \cdot x_i)' \gamma_0 +\varepsilon_i
$$
where $d_i$ indicates whether training was offered, the outcomes $y_i$ are earnings, $x_i$ is a vector of covariates which includes a constant, $\varepsilon_i$ is an unobservable, and $( \beta_0,  \gamma_0)$ are parameters.  Earnings are measured as total earnings over the 30 month period following the assignment into the treatment or control group, and average earnings in the sample are \$19,147. Observed control variables are dummies for black and Hispanic persons, a dummy indicating high-school graduates and GED holders, five age-group dummies, a marital status dummy, a dummy indicating whether the applicant worked 12 or more weeks in the 12 months prior to the assignment, a dummy signifying that earnings data are from a second follow-up survey, and dummies for the recommended service strategy.  See \cite{Abadie:tal:JTPA} for detailed information regarding data collection procedures, sample selection criteria, and institutional details of the JTPA along with additional facts and discussion about the JTPA training experiment. 
In all, the dataset has 5102 observations.  

In this example, we are interested in estimating confidence intervals for individual specific treatment effects.  We form estimates by first calculating the post-lasso estimator of the coefficients
$$(\hat \beta_{PL}, \hat \gamma_{PL})$$
using the procedure described in the Implementation Appendix. Then for each individual $i$, we calculate the individual-specific intent to treat effect given by $$x_i'\hat \gamma_{PL}.$$

There are many ways to construct regressors from the set of dummy variables available.  In this example, we consider two methods to generate regressors.  The first method is based on common practice in econometrics of generating interactions.  The second method is based on the Hadamard-Walsh expansion\footnote{Details about this expansion as well as some of its advantages are described in \cite{set:fourier:sparse}} of the indicator variables described below, which generates a far larger set of regressors.  A larger set of regressors has advantages in that it can make any sparsity assumptions more plausible, though the resulting analysis may suffer in terms of statistical precision due to the increased complexity of the underlying model space.

To obtain the first construction we use for $x_i$, we consider all products of the discrete variables available.  That is, we adopt the common convention of including the dummy variables themselves, all first order interactions between the main dummy variables, all second order interactions, and all further higher order interactions.  Excluding empty and small cells, the dimension of the covariate space is 313.\footnote{Specifically, we start by eliminating all variables with $\leq 5$ nonzero entries in \textit{either} the control or treated subsample.  After these deletions, we then remove any variables if the corresponding diagonal R term in QR decomposition of the design matrix was $<10^{-6}$ over  \textit{either} the control or treated subsample.}  Therefore, with the treatment variable and constant, the total number of unknown parameters is 628.  Though the number of observations is larger than the sample size, the number of parameters is large enough that regularized estimation would be extremely helpful in terms of obtaining informative inference about model parameters.

Figure \ref{JTPACATE} presents pointwise confidence intervals for the individual specific effects for all individuals.\footnote{In principle, other descriptions of the treatment effect distribution can also be reported.  For instance, uniform bands for the sorted effects function could be obtained by combining the results in \cite{Victor:SortedTE} with targeted undersmoothing.  We choose to present pointwise confidence intervals for simplicity.}   The intervals are calculated using four methods.  The first panel presents estimates which use the entire set of control variables.  The second panel presents oracle-style confidence intervals based on post-lasso which ignore first stage model selection.  The third panel presents targeted undersmoothing estimates using $\bar s = 1$.  The fourth panel presents targeted undersmoothing estimates using $\bar s = 5$.  The targeted undersmoothing intervals are calculated with the forward selection greedy approximation described in the Section \ref{sec: main}.  In each case, we use Algorithm 1.

The figure shows that resulting confidence interval lengths using OLS estimates are quite large.  The interval lengths using the oracle-style confidence intervals are comparatively very tight.  Though the oracle-style intervals are expected to have poor performance in finite samples.   Using $\bar s=1$ we see that many of the interval lengths increase by nearly an order of magnitude.  Though interestingly, there is wide variation across individuals in terms of how much the corresponding confidence interval grows.  With $\bar s = 5$, we see that the intervals are in some cases nearly as large as with the OLS-based intervals. For most individuals, the corresponding intervals contain zero.
\begin{figure}[ht]
\caption{JTPA CATE Estimates: Interaction Specification}
\label{JTPACATE}
\includegraphics[scale=.130]{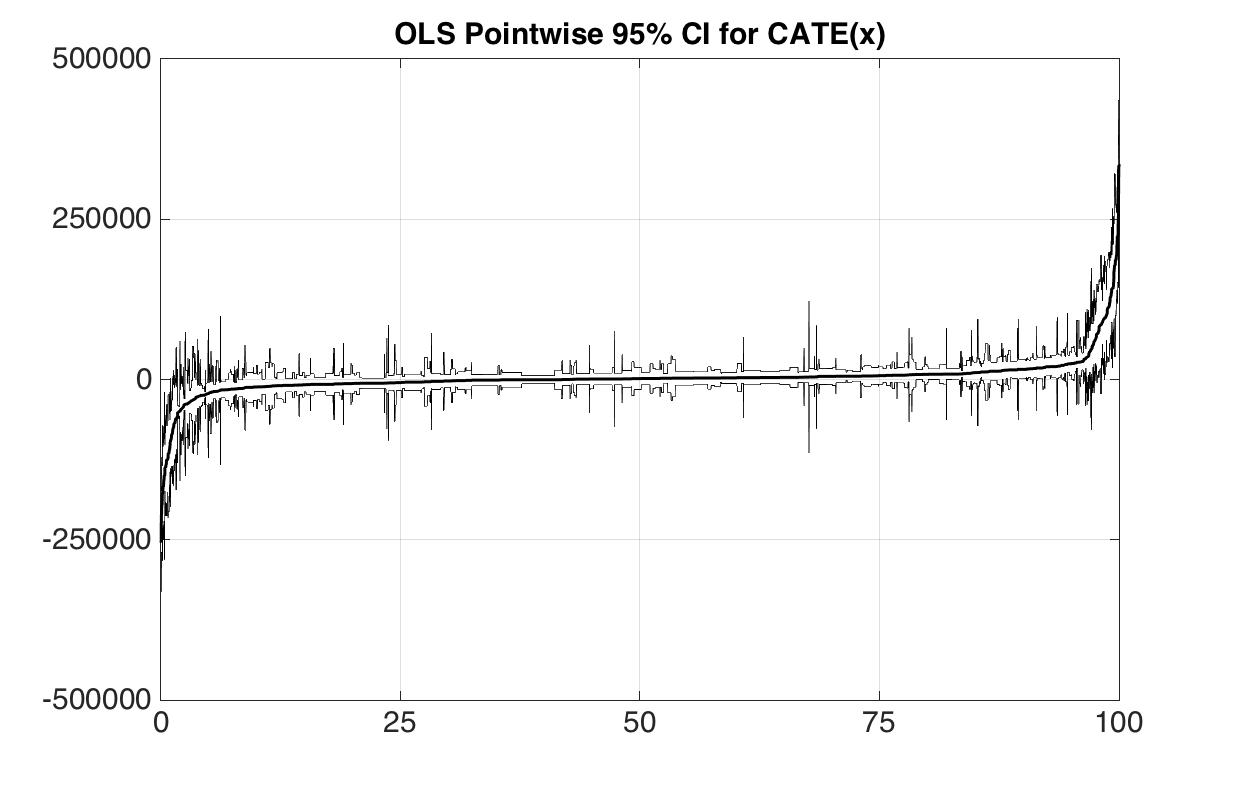}
\includegraphics[scale=.130]{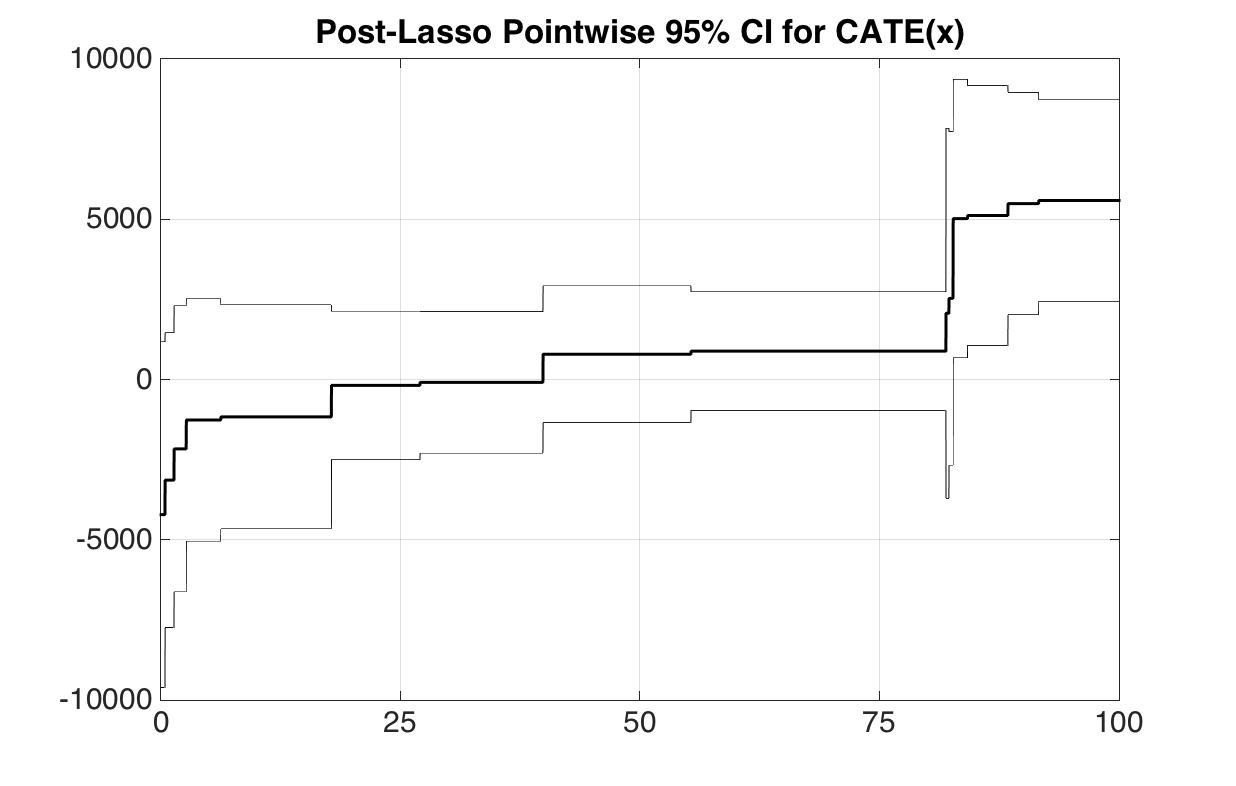}
\includegraphics[scale=.130]{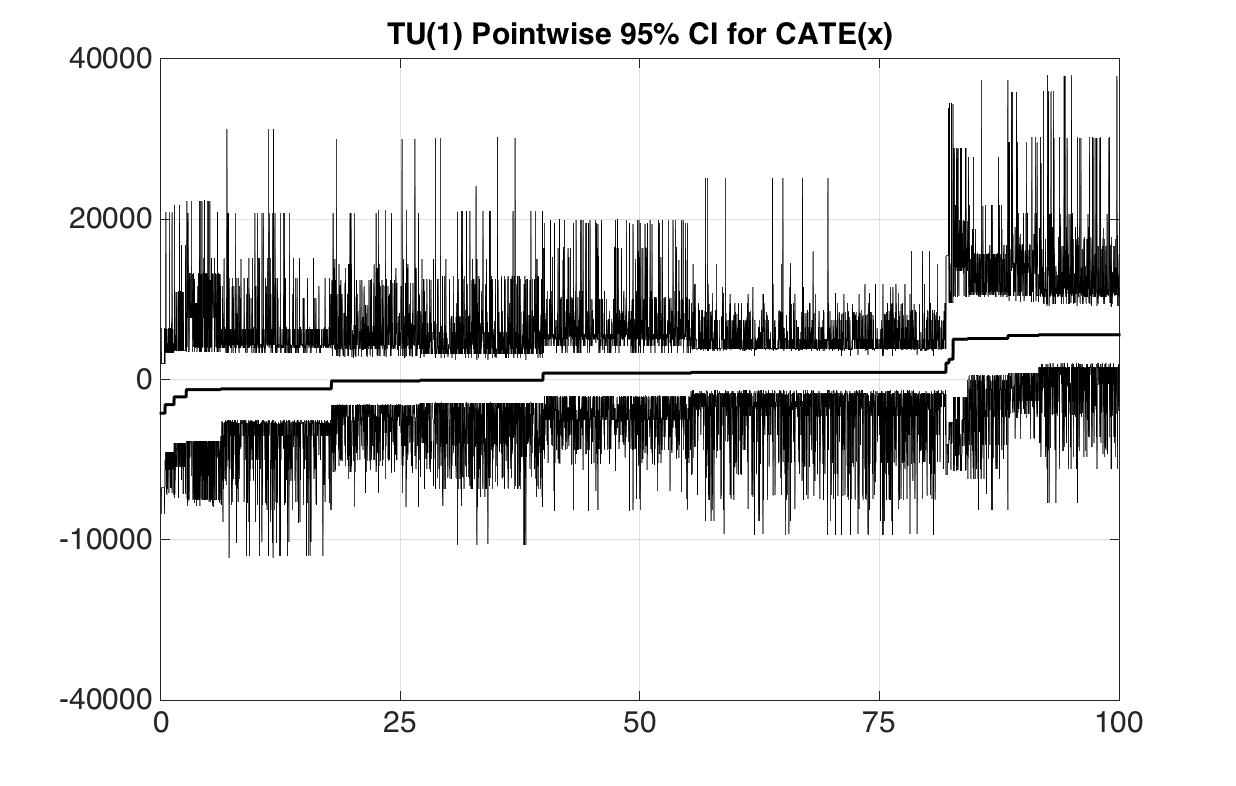}
\includegraphics[scale=.130]{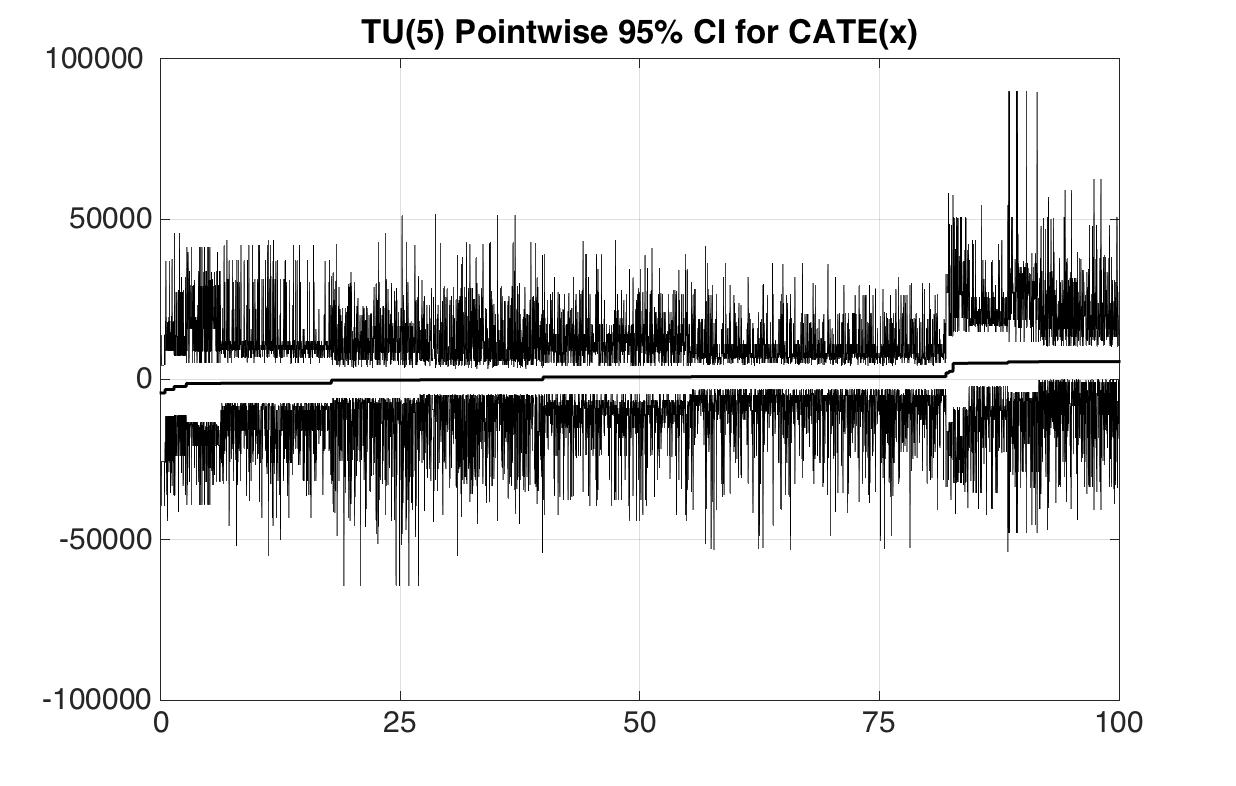}
\begin{flushleft}
\footnotesize{\textbf{Note:}  These figures report estimates of the treatment effect for each individual in the JTPA sample along with pointwise 95\% confidence intervals when the set of controls is constructed by taking all possible interactions of the baseline dummy variables.  Estimates based on OLS and Post-lasso are reported in the upper left and upper right panel respectively.  The lower left and lower right panels present results based on targeted undersmoothing with $\bar{s} = 1$ (``TU(1)'') and with $\bar{s} = 5$ (``TU(5)'') respectively.  It is important to note that vertical axis is different in each figure.}
\end{flushleft}
\end{figure}

Another testable hypothesis of interest is whether there is evidence of any effect heterogeneity.   Within the model, testing the null hypothesis of no treatment heterogeneity is equivalent to testing $H_0:  \gamma_0 = 0$.  As described in the previous section, a test can be implemented using the targeted undersmoothing procedure.   We implement this procedure using the standard Wald test.  The results are reported in Table \ref{JTPATest} for targeted undersmoothing using $\bar s \leq 10$.  We also report the corresponding Wald test using the entire vector of covariates (labeled OLS in the table), and an oracle-style Wald test (labeled PL in the table). We note that the OLS-based result is likely unreliable due to relying on a heteroskedasticity-consistent estimate of a large, full covariance matrix.  We reject the null hypothesis for $\bar s \leq 7$ at the 5\% level but fail to reject for larger $\bar{s}$.  An interesting property of the hypothesis testing scheme is that the degrees of freedom stay constant.  This means that the additional covariates entering the model correspond to components $x_i' \beta_0$, and not the interaction terms $(d_i \cdot x_i)' \gamma_0$.

\begin{table}[ht]
\caption{Testing the Null Hypothesis of No Treatment Effect Heterogeneity: Interaction Specification}
\label{JTPATest}

\centering{}%
\begin{tabular}{lcccc}
\hline 
Estimator & W-statistic & df &  p-value  \tabularnewline
\hline 
OLS	&	679.14 & 313 & 0.0000		\\
PL	&	17.1444 & 7	& 0.0088 	\\
TU(1)	&	16.4910	& 7 &    0.0210	\\
TU(2)	&	   15.9709		& 7 &      0.0254	\\
TU(3)	&	   15.5022	   & 7&      0.0301  \\
TU(4)	&	   15.0803		& 7 &     0.0350 \\
TU(5)	&	   14.7097   & 7& 	    0.0399\\
TU(6)	&	   14.4253	   & 7 & 	    0.0441\\
TU(7)	&		   14.1517   &7  &    0.0485 \\
TU(8)	&	   13.9339	   & 7&     0.0524	\\
TU(9) &	   13.5463	   & 7 &     0.0599	\\
TU(10)	&	   13.3584	   & 7 &    0.0638	\\
\hline 
\end{tabular}
\begin{adjustwidth}{2cm}{2cm}
\footnotesize{\textbf{Note:}  This table presents results for testing the null hypothesis of no treatment effect heterogeneity when the set of controls is constructed by taking all possible interactions of the baseline dummy variables.  We report the value of the Wald statistic (``W-statistic''), degrees of freedom (``df''), and associated p-value (``p-value'').  Results for testing this hypothesis based on OLS and Post-lasso estimates are provided in the first two rows of the table.  Rows labeled ``TU(j)'' correspond to targeted undersmoothing with $\bar{s} = j$.}
\end{adjustwidth}
\end{table}

The existence of a sparse representation of the regression function in the basis given by the interaction expansion is an important modeling assumption in the above analysis.  It is possible to perform a further robustness analysis by considering more expansive models.  In order to illustrate this point, we perform the analysis with an expanded set of transformations of the original dummy variables.  We consider the Hadamard-Walsh basis defined as follows.  Let $v_{i1},...,v_{ik}$ denote the original set of indicator variables. Let each subset $A \subseteq \{1,...,k\}$ index a transformation of $(v_{i1},...,v_{ik})$ given by $\psi_A(v_{i1},...,v_{ik}) = (-1)^{|A \cap \{j:v_{ij}=1\} | }$.  In the expanded model, we include regressors of the form $\psi_A(v_{i1},...,v_{ik})$.  In order to nest the previous analysis, we also include all of the interaction variables from the first specification.\footnote{We choose to only include $\psi_A$ terms as potential covariates for $1< |A| < 6$. Note that for $|A|=1$, the resulting transformations are perfectly correlated to the original indicator variables.}  The result is that $\dim(x_i) = 2927$, including the constant term.  After interacting $x_i$ with the indicator $d_i$, the total dimensionality of the model parameters is 5854, which exceeds the sample size $n=5102$.  

Figure \ref{JTPACATEHAD} presents pointwise confidence intervals for the individual specific effects for all individuals using the new, expanded set of transformations of the original variables.  In this analysis, OLS is no longer feasible because the dimensionality of the model exceeds the sample size.   The first panel presents oracle-style confidence intervals, which ignore first stage model selection.  The estimated distribution of heterogenous effects is much smoother than that obtained in Figure \ref{JTPACATE}.  Interestingly, the initial model selection selects terms from both the interaction expansion and the Hadamard-Walsh expansions.  The second panel presents targeted undersmoothing estimates using $\bar s = 1$, and the third panel presents targeted undersmoothing estimates using $\bar s = 5$. The targeted undersmoothing intervals are calculated with the forward selection greedy approximation described in the Section \ref{sec: main}.  As before, in each case, we use the single sample option described in Algorithm 1.
 
\begin{figure}[ht]
\caption{JTPA CATE Estimates: Hadamard-Walsh Specification}
\label{JTPACATEHAD}
\includegraphics[scale=.09]{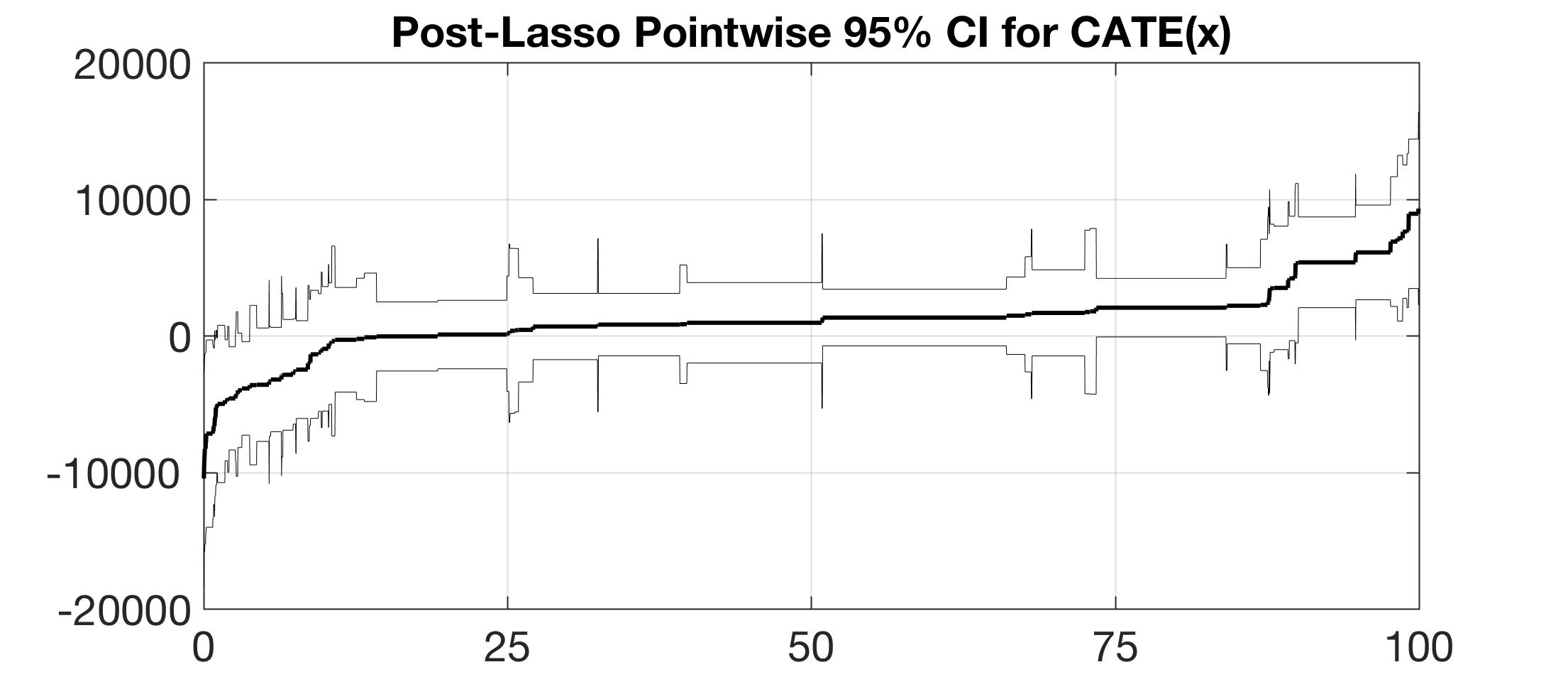}
\includegraphics[scale=.09]{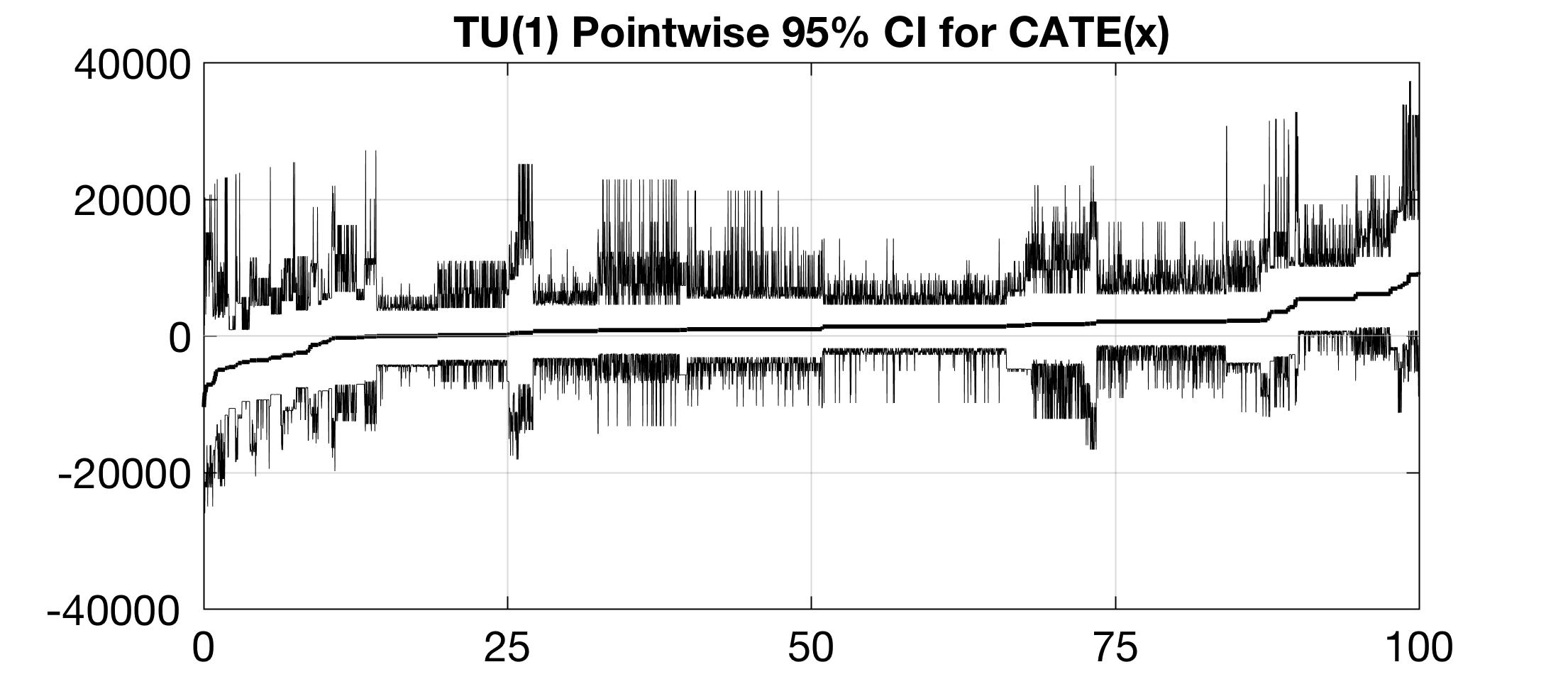}
\includegraphics[scale=.09]{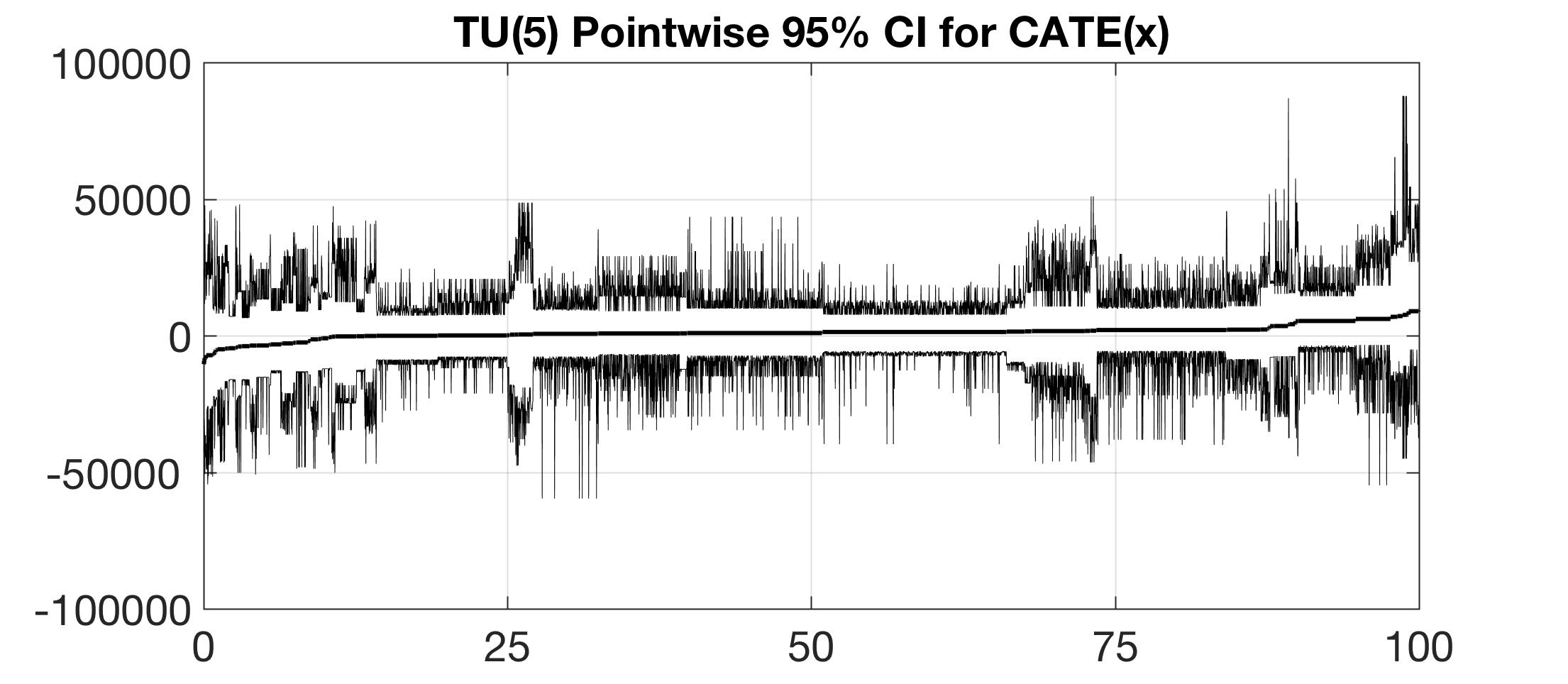}
\begin{flushleft}
\footnotesize{\textbf{Note:}  These figures report estimates of the treatment effect for each individual in the JTPA sample along with pointwise 95\% confidence intervals where the set of controls is constructed by taking all possible interactions of the baseline dummy variables and augmenting with the Hadamard-Walsh basis as described in the main text.  Estimates based on Post-lasso are reported in the top panel.  The middle and bottom panels present results based on targeted undersmoothing with $\bar{s} = 1$ (``TU(1)'') and with $\bar{s} = 5$ (``TU(5)'') respectively.  It is important to note that vertical axis is different in each figure.}
\end{flushleft}
\end{figure}

The figure shows that resulting oracle-style confidence intervals are similar to those in Figure \ref{JTPACATE}.  Both sets of interval lengths are comparatively very tight.  Though, as discussed above, the oracle-style intervals are expected to have poor performance in finite samples.   Using $\bar s=1$ we see that many of the interval lengths increase as before. There still remains a set of individuals for whom the corresponding confidence interval excludes zero.  With $\bar s = 5$, for all individuals, the corresponding intervals contain zero.  Though not pictured in Figure \ref{JTPACATEHAD}, we note that all intervals for individual-specific treatment effects include 0 as soon as $\bar s = 2$.

Finally, we again report results for testing the null hypothesis of no treatment heterogeneity, $H_0:  \gamma_0 = 0$, using the expanded model in Table \ref{JTPATestHAD}.  The procedure is implemented as before, using the standard Wald test and the results are reported in targeted undersmoothing using $\bar s \leq 10$.   We see that we reject the null hypothesis for $\bar s =  1$ at the 5\% level but fail to reject for $\bar{s} \geq 2$. 

\begin{table}[ht]
\caption{Testing the Null Hypothesis of No Treatment Effect Heterogeneity: Hadamard-Walsh Specification}
\label{JTPATestHAD}

\centering{}%
\begin{tabular}{lcccc}
\hline 
Estimator & W-statistic & df &  p-value  \tabularnewline
\hline 
PL  & 20.6884  & 9 & 0.0141  \\ 
TU(1)  & 19.4059  & 10 & 0.0354  \\ 
TU(2)  & 18.1018  & 10 & 0.0533  \\ 
TU(3)  & 17.5105  & 10 & 0.0638  \\ 
TU(4)  & 16.8746  & 10 & 0.0772  \\ 
TU(5)  & 16.3060  & 10 & 0.0912  \\ 
TU(6)  & 15.7466  & 10 & 0.1071  \\ 
TU(7)  & 15.2801  & 10 & 0.1222  \\ 
TU(8)  & 14.8188  & 10 & 0.1388  \\ 
TU(9)  & 14.3024  & 10 & 0.1596  \\ 
TU(10)  & 13.9031  & 10 & 0.1775  \\ 
\hline 
\end{tabular}
\begin{adjustwidth}{2cm}{2cm}
\footnotesize{\textbf{Note:}  This table presents results for testing the null hypothesis of no treatment effect heterogeneity when the set of controls is constructed by taking all possible interactions of the baseline dummy variables and augmenting with the Hadamard-Walsh basis as described in the main text.  We report the value of the Wald statistic (``W-statistic''), degrees of freedom (``df''), and associated p-value (``p-value'').  Results for testing this hypothesis based on OLS and Post-lasso estimates are provided in the first two rows of the table.  Rows labeled ``TU(j)'' correspond to targeted undersmoothing with $\bar{s} = j$.}
\end{adjustwidth}
\end{table}

Taken together, the results in this section suggest there is mild evidence for treatment effect heterogeneity in this example. We would reject the hypothesis of no heterogeneity and also obtain some evidence for individual specific treatment effects that differ from zero when using oracle model selection results.  However, we cannot rule out the possibility of no treatment effect heterogeneity after allowing for a modest number of model selection mistakes within either of the bases considered.  Thus, to draw strong conclusions about treatment effect heterogeneity, one must believe that the initial model selection procedure is very close to perfect in this example.

\subsection{Application II: Heterogeneous Treatment Effects in Direct Mail}\label{subsec: mail}

The targeting of individuals with appropriate interventions that induce
preferred outcomes is a relevant problem in various application areas
including business, political science and economics. In the field
of marketing, such targeting has been the key instrument of retailers
that use direct mail as the focal intervention to inform and persuade
their customers to purchase from their catalogs. These catalogs are
often relatively expensive to produce and firms spend significant
amounts in this endeavor.\footnote{In 2009, the estimated spending on catalogs was \$15.1B; and over 10B catalogs were mailed in 2015 (\cite{catalogs:nytimes}, \cite{high:costs:catalog:retailing}).}

Our data for this example comes from a large multi-product retailer that sells directly
to consumers online but also via mail, phone and retail channels.
The firm's budget for direct-mailed catalogs is over \$120M and net
sales per year are in excess of \$1.5B. The firm routinely runs experiments
to evaluate the effectiveness of its catalog mailing strategy. Typically,
these experiments have two conditions (mail, no-mail) that are randomized
across customers. Our data focuses on one such experiment that involved
over 290,000 customers. The data also include a list of 486 descriptors
of the the individual customers. These descriptors include demographic
characteristics (age, income, gender, state), details of past promotional
activity they may have received as well as their past consumption behavior data
including purchases, the timing of such purchases, the number of orders
in the past year, and the extent of their expenditures with the firm.
This last set of variables are commonly referred to as RFM (Recency,
Frequency and Monetary value) metrics in the direct mail industry
and are commonly used variables in analyzing and predicting customer
behavior. We note that the design matrix in our analysis contains 2139 columns once categorical variables are expanded.

In our analysis, we estimate the following simple specification of a model with heterogeneous treatment effects:
\begin{align*}
y_{i}=f_0(d_i,x_i,\varepsilon_i)=x_{i}\beta_{0}+(d_{i}\cdot x_{i})'\gamma_{0}+\varepsilon_{i} & .
\end{align*}
\noindent In the above, $d_{i}$ is an indicator that a consumer has been randomly assigned to receive a direct mail marketing instrument (a catalog), and the $x_{i}$ are customer characteristics. $y_{i}$ are dollar expenditures
by the customer over a 3-month horizon following the mailing of the
marketing instrument.   For notational convenience, we assume that $(x_i, \varepsilon_i)_{i=1}^n$ are $n$ i.i.d. draws, having the same distribution as the generic pair of random variables $(x,\varepsilon)$.

In this exercise, we assume that the firm is interested in evaluating a marketing strategy formed from targeting individuals based on their individual-specific treatment effects versus one of two simple baseline strategies - either mailing to no one or mailing to everyone.   To this end, we note that a mailing strategy $\tilde {d}=\tilde d(x)$ assigns customers with characteristics $x$
to either receive the mailing or not.  We then adopt targeted undersmoothing to provide a simple mechanism that allows
the firm to statistically evaluate the difference between any two
competing mailing strategies on the basis of average expected profits.
The average expected profit from implementing a strategy $\tilde {d}$
is given by
\begin{align*}
\Ep[\pi (\tilde {d})]= \Ep\left[\nu f_0\left(\tilde d(x),x,\varepsilon \right)-\tilde d(x)c \right].
\end{align*}

A few points about the above quantity are worth noting. First, the
firm has a known margin $\left(0<\nu<1\right)$ that applies to sales
generated by its customers. For simplicity, we assume that the cost to the firm of targeting each consumer, $c$, is constant and known \textit{ex ante}.\footnote{A more general approach would be to write costs as functions of $x.$ Implementing this approach would require specific data about individual mailing costs which we currently do not have.  We could also assume that costs are drawn from some known distribution where the exact realization is unknown by the firm until after the mailings have been sent out and calculate expected profits integrating over this cost distribution.}  Within the model, there is just one remaining source of uncertainty - the unanticipated demand shocks $\varepsilon$ which are only observed via outcomes - which are assumed to have conditional mean zero.

We begin by examining two extremal mailing strategies where
either no customers receive a catalog (`no-mailings') by setting $\tilde d(x)=0$
uniformly or a `blanket-mailing' strategy wherein all customers receive
a catalog (i.e. $\tilde d(x)=1$ for all $x$). For the no-mailings strategy
expected profits are 
\begin{align*}
\Ep[\pi^{0}] & =\Ep[\nu f_0 \left(0,x,\varepsilon \right)]\\
 & = \nu\Ep[x'\beta_{0}].
\end{align*}
Similarly, the expected profit for the blanket mailing strategy can
be written as 
\begin{align*}
\Ep[\pi^{1}] & =\Ep[\nu f_0(1,x,\varepsilon) - c]\\
 & = \nu\Ep[x'(\beta_{0}+\gamma_{0})] - c.
\end{align*}

A sophisticated firm might be interested in optimizing the
mailing strategy based on expected consumer response.\footnote{See \cite{AtheyWager:PolicyLearning} and \cite{luedtke:vdL:OptimalTreatment} for interesting approaches to estimating and performing inference for optimal treatment strategies.} One simple, sensible mailing strategy
would be to mail to a consumer with characteristics $x$ whenever
the expected increment in profits for that customer exceeds costs.
The rule can be described by
\begin{align*}
d^{*}(x) & =\textbf{1}\{\nu(x'\beta_{0}+x'\gamma_{0})-\nu(x'\beta_{0}) > c \}\\
 & = \textbf{1}\{\nu\left(x'\gamma_{0}\right) > c\}.
\end{align*}
Using this strategy, we then have expected per consumer profit  of 
\begin{align*}
\Ep[\pi^{*}] & =\Ep[\nu f_0(d^*(x),x,\varepsilon) - c d^*(x)]\\
 & = \nu\Ep[x\beta_{0}] + \nu\Ep[(d^*(x)\cdot x)'\gamma_{0}] - c \text{Pr}(d^*(x) = 1).
\end{align*}

Now suppose we wish to compare the targeted strategy to the `blanket' or `no-mailing' strategies. We can describe the
difference in profit between the targeted and no-mailing strategies as
\begin{align*}
\Ep[\Delta\pi^{*0}] & = \Ep\left[\pi^{*}\right]-\Ep\left[\pi^{0}\right] \\
 & = \nu\Ep[(d^*(x)\cdot x)'\gamma_{0}] - c \text{Pr}(d^*(x) = 1).
\end{align*}
Similary, the difference between the targeted and blanket strategies would be
\begin{align*}
\Ep[\Delta\pi^{*1}] & = \Ep\left[\pi^{*}\right]-\Ep\left[\pi^{1}\right] \\
 & = \nu\Ep[(d^*(x)-1)\cdot x'\gamma_{0}] - c (\text{Pr}(d^*(x) = 1)-1).
\end{align*}
We note that both of the expected per-person profit differentials capture the benefits due to cost savings and lost revenues of targeting based on expected treatment effects.  Relative to targeting no one, targeting based on anticipated treatment effect has the potential to increase revenue at the cost of paying the treatment cost for the targeted individuals.  Relative to treating everyone, targeting based on anticipated revenues has the potential to decrease costs by not targeting individuals for whom the treatment is anticipated to be ineffective.  

Simple natural estimators exist for both $\Ep[\Delta\pi^{*0}]$ and $\Ep[\Delta\pi^{*1}]$
The natural estimator for $\Ep[\Delta\pi^{*0}]$ is
\begin{align*}
\widehat{\Delta\pi^{*0}} & = \frac{\nu}{n}\sum_{i=1}^{n} \Big [\textbf{1}\{\nu\left(x_{i}'\widehat\gamma_{0}\right) > c\}\left(x_{i}'\widehat\gamma_{0} - c/\nu\right)\Big ]
\end{align*}
for some estimator $\widehat\gamma_0$.  
Similarly, a natural estimator of $\Ep[\Delta\pi^{*1}]$ is 
\begin{align*}
\widehat{\Delta\pi^{*1}} & = \frac{\nu}{n}\sum_{i=1}^{n} \Big [\left(\textbf{1}\{\nu\left(x_{i}'\widehat\gamma_{0}\right) > c\}-1\right)\left(x_{i}'\widehat\gamma_{0} - c/\nu\right)\Big ]
\end{align*}
for an estimator $\widehat\gamma_0$.  Under the sparsity assumptions on the true model maintained in this paper and conventional regularity conditions, $\widehat{\Delta\pi^{*0}}$ and $\widehat{\Delta\pi^{*1}}$ will by asymptotically normal with standard error that can be estimated via the delta-method when $\gamma_0$ is estimated from the true model.  Based on this observation, we can apply the targeted undersmoothing approach to conduct inference on potential profit improvements from targeting based on the rule $d^*(x)$ relative to the two simple baseline strategies.

We present estimates and targeted undersmoothing confidence intervals for $\Ep[\Delta\pi^{*0}] $ and $\Ep[\Delta\pi^{*1}] $ in Tables \ref{CATPi0} and \ref{CATPi1} respectively.\footnote{As with the JTPA example, before any estimation is done, variables with a very small number of nonzero observations are excluded.  In the first pass, variables with $\leq 100$ nonzero entries in the \textit{entire} sample were eliminated.  In the second pass, variables were eliminated if the corresponding diagonal R term in the design matrix QR decomposition was $<10^{-6}$ over  \textit{either} control or treated subsample.}  In all calculations, the margin parameter is set to $\nu = 0.30$ and the cost parameter is set at $c = 0.70$ based on input from the firm.  We first report OLS-based estimates, which use all covariates.  In addition, we report oracle-style post-lasso estimates as well as targeted undersmoothing estimates for $\bar s \leq 10$.  We implement the first stage model selection using the procedure in Appendix 1.  We use heteroskedasticity consistent standard errors and calculate confidence intervals using the delta method.

\begin{table}[htbp]
\caption{Estimates for Average Profit Differential Relative to No Mailing: $$\Ep[\Delta\pi^{*0}] $$ }
\label{CATPi0}

\centering{}
\begin{tabular}{lcccc}
\hline 
Estimator & Estimate & S.E. & Lower & Upper\tabularnewline
\hline 
OLS	&	1.1514	&	0.0655	&	1.0229	&	1.2798	\\
PL	&	0.6984	&	0.0441	&	0.6119	&	0.7849	\\
TU(1)	&		&		&	0.6099	&	0.7960	\\
TU(2)	&		&		&	0.6083	&	0.8063	\\
TU(3)	&		&		&	0.6070	&	0.8131	\\
TU(4)	&		&		&	0.6062	&	0.8188	\\
TU(5)	&		&		&	0.6054	&	0.8269	\\
TU(6)	&		&		&	0.6045	&	0.8323	\\
TU(7)	&		&		&	0.6036	&	0.8375	\\
TU(8)	&		&		&	0.6029	&	0.8430	\\
TU(9)	&		&		&	0.6023	&	0.8476	\\
TU(10)	&		&		&	0.6018	&	0.8514	\\
\hline 
\end{tabular}
\begin{adjustwidth}{2cm}{2cm}
\footnotesize{\textbf{Note:}  This table presents estimates of the average profit differential between the targeted mailing strategy and the strategy that mails to no one.  OLS and Post-lasso estimates of the average profit differential and associated standard errors are provided in the ``Estimate'' and ``S.E.'' columns in the first two rows.  The ``Lower'' and ``Upper'' columns respectively report the lower and upper bounds of 95\% confidence intervals.  Rows labeled ``TU(j)'' correspond to targeted undersmoothing with $\bar{s} = j$.}
\end{adjustwidth}
\end{table}

\begin{table}[htbp]
\caption{Estimates for Average Profit Differential Relative to Uniform Mailing:
$$\Ep[\Delta\pi^{*1}] $$}
\label{CATPi1}

\centering{}%
\begin{tabular}{lcccc}
\hline 
Estimator & Estimate & S.E. & Lower & Upper\tabularnewline
\hline 
OLS	&	0.6332	&	0.0789	&	0.4785	&	0.7879	\\
PL	&	0.1811	&	0.0497	&	0.0837	&	0.2784	\\
TU(1)	&		&		&	0.0821	&	0.2905	\\
TU(2)	&		&		&	0.0807	&	0.3001	\\
TU(3)	&		&		&	0.0798	&	0.3076	\\
TU(4)	&		&		&	0.0788	&	0.3132	\\
TU(5)	&		&		&	0.0779	&	0.3205	\\
TU(6)	&		&		&	0.0773	&	0.3261	\\
TU(7)	&		&		&	0.0767	&	0.3309	\\
TU(8)	&		&		&	0.0762	&	0.3361	\\
TU(9)	&		&		&	0.0758	&	0.3401	\\
TU(10)	&		&		&	0.0754	&	0.3437	\\
\hline 
\end{tabular}
\begin{adjustwidth}{2cm}{2cm}
\footnotesize{\textbf{Note:}  This table presents estimates of the average profit differential between the targeted mailing strategy and the strategy that mails to everyone.  OLS and Post-lasso estimates of the average profit differential and associated standard errors are provided in the ``Estimate'' and ``S.E.'' columns in the first two rows.  The ``Lower'' and ``Upper'' columns respectively report the lower and upper bounds of 95\% confidence intervals.  Rows labeled ``TU(j)'' correspond to targeted undersmoothing with $\bar{s} = j$.}
\end{adjustwidth}
\end{table}

We see that the confidence intervals for the parameters $\Ep[\Delta\pi^{*0}] $ and $\Ep[\Delta\pi^{*1}] $ are very robust to different assumptions about the true underlying sparsity level $\bar s$.  Interestingly, the OLS-based intervals are completely different from the targeted undersmoothing intervals for every value of $\bar s$ reported.  This difference is likely due to a failure of OLS in this example.  In the setting of the simulation study below, we find that OLS-based intervals achieve poor coverage probabilities with coverages as low as 0.00\% in some settings.  The poor performance of OLS in the simulation study is due to biases arising from taking a nonlinear transformation of the estimated coefficient vector and a failure of the standard delta method with a large number of covariates.\footnote{Bias corrections for the delta method in settings with many covariates are described in \cite{cattaneo2017}.  For simplicity, we report the estimates and intervals which correspond to common practice.} In this example, the OLS-based estimates seem to overstate both $\Ep[\Delta\pi^{*0}] $ and $\Ep[\Delta\pi^{*1}]$.

Finally, we test the hypothesis $H_0:  \gamma_0 = 0$ in Table \ref{CATTest}.  As in the previous example, this hypothesis corresponds to the hypothesis of no treatment effect heterogeneity.  From a policy standpoint, understanding whether there is evidence for treatment effect heterogeneity may be interesting as there is clearly no gain from any targeting strategy based on observables if the treatment effect is constant across these observables.  The results for testing this hypothesis are presented in Table 5. We note that the OLS-based result is likely unreliable due to relying on a heteroskedasticity-consistent estimate of a large, full covariance matrix, but we report the result for completeness. In this example, we see that the p-values are very near zero for all considered values of $\bar s$, suggesting that there is strong evidence against the hypothesis of no treatment effect heterogeneity that is robust to fairly large deviations from the initially selected model. As in the previous example, we also see that the degrees of freedom of the test is constant across the different values of $\bar{s}$ indicating that the additional variables being added all enter the model via the $\x_i\beta_0$ term.  Adding variables to this part of the model that are correlated to the estimated treatment effect reduces the signal available to learn about treatment effect heterogeneity and thus intuitively provides ``worst-case'' deviations from the standpoint of drawing conclusions about the existence of this heterogeneity.

\begin{table}[htbp]
\caption{Testing the Null Hypothesis of No Treatment Effect Heterogeneity}
\label{CATTest}

\centering{}%
\begin{tabular}{lcccc}
\hline 
Estimator & W-statistic & df & p-value  \tabularnewline
\hline 
OLS	&	1865.7525 & 1069 & 0.000		\\
PL	&	692.4930 & 45&0.000		\\
TU(1)	&	  685.5655 & 45& 0.000		\\
TU(2)	&    680.9011 & 45&	0.000			\\
TU(3)	&  678.0659 & 45&	0.000	\\
TU(4)	&	  675.3192 & 45&	0.000	\\
TU(5)	&  672.9171 & 45& 0.000	\\
TU(6)	&	  671.3020 & 45& 0.000		\\
TU(7)	&	  669.6907 & 45&	0.000\\
TU(8)	&	 668.4609 & 45& 0.000		\\
TU(9)	&	  667.4802 & 45& 0.000		\\
TU(10)	&	  666.4816 & 45& 0.000		\\
\hline 
\end{tabular}
\begin{adjustwidth}{2cm}{2cm}
\footnotesize{\textbf{Note:}  This table presents results for testing the null hypothesis of no treatment effect heterogeneity.  We report the value of the Wald statistic (``W-statistic''), degrees of freedom (``df''), and associated p-value (``p-value'').  Results for testing this hypothesis based on OLS and Post-lasso estimates are provided in the first two rows of the table.  Rows labeled ``TU(j)'' correspond to targeted undersmoothing with $\bar{s} = j$.}
\end{adjustwidth}
\end{table}

\section{Simulation Study}

In this section, we present a simulation study designed to demonstrate the properties of the proposed procedure in finite samples.  We consider six simulation designs based on the example in Section \ref{subsec: mail}.  We generate data for each simulation replication as iid draws for $i = 1,...,n$ from the model
\begin{align*}\label{sim: true}
&y_i = \alpha_0 + x_i'\beta_0 + d_i \gamma_0 + d_i\cdot x_i' \zeta_0 +  \varepsilon_i,
\end{align*}
\begin{align*}
&p = 2+ 2 \text{dim}(x_i) = 2(1+k),\\
&w_{ij} \sim N(0,1) \ \text{with corr}(w_{ij_1},w_{ij_2}) = 0.8^{|j_1-j_2|}, \\
&x_{ij} = (w_{ij} - \tau_j)\textbf{1}\{ w_{ij} \geq  \tau_j\},\\
&\tau_j \sim \text{unif}(0,1.28), \ {iid}, \\
& d_i \sim \text{Bernoulli}(0.5),\\
&\varepsilon_i \sim N(0,1), \\
& (\alpha_0,\beta_0') = c_{.25} (1/\sqrt{s_0},(2/\sqrt{s_0})\iota_{s_0/4}',(2/\sqrt{ns_0})\iota_{s_0/4}',0_{k-s_0/2}') \odot (1,\upsilon'), \\
& (\gamma_0,\zeta_0') = c_{.25} (1/(2\sqrt{s_0}),(4/\sqrt{ns_0})\iota_{s_0/4}',(4/\sqrt{s_0})\iota_{s/4}',0_{k-s_0/2}') \odot (1,\upsilon'), 
\end{align*}
where $c_{.25}$ is a constant that is chosen so that the population $R^2$ of the regression of $y_i$ onto $(1,x_i',d_i,d_i x_i')$ is $0.25$, $\iota_m$ is an $m \times 1$ vector of ones, $0_m$ is an $m \times 1$ vector of zeros, $\upsilon$ is a $k \times 1$ vector with $j^{th}$ element given by $\upsilon_j = (-1)^{j-1}$, and $\odot$ denotes the Hadamard product.  The six considered simulation designs are based on varying $p \in \{202,602\}$ and $s_0 \in \{4,8,16\}$.  In all simulations, we take $n = 400$.  We note that the process for the $x_{ij}$ is meant to approximate what we see in the observables in the example in Section \ref{subsec: mail} which are all positive with large fractions of observations exactly at 0.  For each simulation design, we estimate and construct confidence sets for three functionals: (1) the value of a single coefficient (specifically $\zeta_{0,1}$), (2) an individual treatment effect for a fixed hypothetical subject (with $x^* = .5\iota_{\text{dim}(x_i)}$), and (3) the average per-person profit differential from a targeting rule based on estimated individual specific treatment effects and a rule which treats no one ($\Ep[\Delta\pi^{*0}]$ defined in Section \ref{subsec: mail}).

For each set of model parameters, we simulate 500 replications and present the properties of several estimators:

\

\begin{itemize}
\item[1.]  \textbf{True.} An infeasible estimator based on ordinary least squares on the correct support of the underlying model.  
\item[2.]  \textbf{All.} An estimator based on ordinary least squares using all covariates.  
\item[3.]  \textbf{Double.}  The post-double estimator as described in \cite{BCH-PLM}
\item[4.]  \textbf{Lasso.}  An estimator based on lasso.  Standard errors computed using lasso residuals.
\item[5.] \textbf {PL.} An estimator based on the post-lasso estimator of \cite{BC-PostLASSO}.  Standard errors computed using post-lasso residuals.
\item[6.]  \textbf{LCV.} An estimator based on lasso with penalty level chosen by 10-fold cross validation.  Standard errors are computed using lasso residuals.
\item[7.]  \textbf{ZB.}  Confidence intervals based on inverting the hypothesis test prosed in \cite{ZhuBradic:linear}.
\item[8.]  \textbf{TU(1).}  Targeted undersmoothing with $\overline s = 1$ using Algorithms 1 and 3.  Initial model $\hat S^0$ description in Implementation Appendix.  
\item[9.]  \textbf{TU(10).} Targeted undersmoothing with $\overline s = 10$ using Algorithms 1 and 3. Initial model $\hat S^0$ description in Implementation Appendix.
\end{itemize} 

\

All standard errors are computed using conventional heteroskedasticity consistent standard errors (e.g. \cite{white:het}) using the estimated residuals indicated above.  We give details on implementation specifics in the following paragraphs.\footnote{There are many choices about how to implement the different procedures, e.g. whether to split into treatment and control observations and which penalty parameters to use.  The choices below were based on initial simulations where they seemed to produce the most favorable performance for the non-targeted undersmoothing approaches.}

For True, All, and Double, we directly estimate the model above.  For Double, we apply \cite{BCH-PLM} with a minor modification.  We implement the relevant lasso regressions from \cite{BCH-PLM} using the modified heteroskedastic lasso of Appendix 1.  

To implement lasso, PL, we use the implementation given in Appendix 1 to select a model.  The PL estimates re-estimate coefficients by applying OLS with only the variables selected by lasso.  For LCV, we use a modification of the procedure in Appendix 1, where 10-fold cross-validation within each subset is used to choose the tuning parameter to use in that subset.  We then apply the conventional lasso within each subset based on these estimated tuning parameters.  For these methods, we then can obtain estimates and standard errors for the functionals of interest in the obvious manner.  ZB implements the proposed method of inference for dense linear functionals of a parameter vector from \cite{ZhuBradic:linear}.
Finally, the PL model serves as our initial model when applying targeted undersmoothing.  We apply targeted undersmoothing for $\bar{s} = 1,...,10$.

To measure the performance of the nine procedures, we report estimates of bias, standard deviation, root mean-square error (RMSE), coverage probability for a 95\% confidence interval, and corresponding confidence interval length from the simulation in {Tables Sim1-Sim6 and Figures Sim1-Sim6}.  In the figures, we provide average confidence interval lengths and coverage probabilities along the 10-steps of the forward selection path produced in the simulation.  As a benchmark, we superimpose coverage probabilities and interval lengths for the infeasible `True' estimator which knows the correct model on the targeted undersmoothing path plots.   

The `True' estimator provides an infeasible benchmark which serves as a basis for comparison.  In most simulations, the `True'  estimator achieves the target 95\% coverage probability.  In general, the `True' estimator also achieves the smallest bias, RMSE, and shortest confidence intervals.  All other estimators provide feasible alternatives that ideally would approximate the behavior of this infeasible benchmark.

When the number of parameters to be estimated is smaller than the sample size, a simple feasible option is to estimate the full-model without any model selection.  In terms of our simulation, this approach clearly results in small bias for the individual regression parameter and for the individual-specific treatment effect as both of these objects are linear combinations of the regression coefficients and the variables in the design are mean-independent of the error term.  The cost of estimating the full model is decreased estimation precision as evidenced by relatively large standard deviation and RMSE relative to the other point estimators.  We also see that the confidence intervals produced after estimating the full model are relatively long, often longer than the intervals resulted from targeted undersmoothing with small or moderate $\bar{s}$.  The most interesting feature of the results based on the full model are for estimating the profit differential.  For this object, the estimator is dominated by bias due to the profit differential depending nonlinearly on the model parameters and the imprecision in estimating these parameters.  This bias then results in very poor coverage properties for the true profit differential.  This behavior can be viewed as a failure of the delta-method in moderate or high-dimensional models; see \cite{cjm:nonlinear}.  We suspect this behavior will carry over to many nonlinear settings.

We next examine the performance of `Lasso' and `PL'.  We note that the lasso penalty parameter in this case is set in a manner that theoretically provides lasso with an optimal rate of convergence and guarantees that the $\hat{s} = O(1)s_0$.  We then conduct inference in these cases by relying on oracle-type results (see for example \cite{Zou2006}, \cite{BuneaTsybakovWegkamp2007b}) that ignore the first step model selection.  These estimators behave roughly as expected by theory.  In general, the estimators are competitive in terms of RMSE for all objects considered across all different designs.  However, their bias also tends to be comparable to their standard deviation due to regularization and model selection mistakes.  Oracle-style approximations do not account explicitly for this remaining bias due to regularization and as a result do not achieve correct coverage rates.  We note that these distortions can be severe.  Coverage for these procedures is generally far from the nominal 95\%; and in some cases, the estimators have 0\% coverage.  We note that targeted undersmoothing is expressly designed to offer a generic approach to address the presence of this bias.

The `LCV' estimator is similar to `Lasso' and `PL' in that it applies oracle-style inference after selecting a model from the data.  The difference is that cross-validation tends to produce penalty parameters that are much smaller than the theoretically motivated values used in `Lasso' and `PL'.  This reduction in the penalty parameter allows extra variables to enter the model relative to the case where the larger penalty parameters are used.  In this sense, such a procedure can also be thought of as an undersmoothing procedure, though the ``undersmoothing'' is targeted toward model fit.\footnote{\cite{ccl:cv} demonstrates that cross-validation may produce estimates with slower than optimal convergence rates with models that are much too complex in the sense that $\hat{s} \gg s_0$.}  In these simulations, we see that LCV tends to produce estimates of the regression coefficient and individual-specific treatment effect with bias similar to that obtained with lasso and PL, though LCV also tends to have a larger standard deviation than these estimators as well.  The similar bias and larger standard deviation results in LCV tending to be outperformed in terms of RMSE for these objects but also results in better coverage properties of the LCV intervals than the lasso or PL intervals - though LCV coverage still tends to be far from the nominal level.\footnote{Exceptions are coverage of the individual specific treatment effect in Tables Sim1, Sim2, Sim4, Sim5.}   For the profit differential, LCV is less-biased than lasso in all cases and less-biased than PL in four of six cases while generally having similar standard deviation.  Thus, LCV is competitive in terms of RMSE for this object.  However, sufficient bias remains for confidence intervals to remain substantively distorted, producing coverage probabilities for the profit differential that range between 0.63 and 0.90.

In many studies, the object of interest is an inherently low-dimensional parameter, such as a single regression coefficient or an average treatment effect, and semi-parametric estimation can be designed that specifically targets this low-dimensional parameter of interest.\footnote{See, for example, \cite{bickel1998book}, \cite{vdV-W}, \cite{newey:semiparametric:1994}, \cite{vanderLann:Daniel:2006targeted} for classic examples.  \cite{DML} provide a recent treatment in a high-dimensional setting.}  This approach is adopted in the high-dimensional linear model setting in \cite{BCH-PLM}, \cite{vdGBRD:AsymptoticConfidenceSets} and \cite{ZhangZhang:CI} for estimating a single regression coefficient of interest.  For regression coefficients, these procedures are $\sqrt{n}$-consistent and semi-parametrically efficient within the model considered in the simulation.  They also theoretically deliver uniformly valid inference over large classes of models which include cases where perfect model selection is theoretically impossible.  In terms of our simulations, this approach does relatively well in the $s_0 = 4$ case, delivering performance which is comparable to the infeasible oracle.  However, in the $s_0 = 8$ and $s_0 = 16$ cases, the point estimator has a large bias which translates into relatively poor coverage properties.\footnote{The behavior may be improved by considering double machine learning as defined in \cite{DML}, which relies on weaker sparsity conditions than \cite{BCH-PLM}. We note that targeted undersmoothing offers an approach to gauging the sensitivity of conclusions to model selection mistakes and could be applied directly to semiparametric targets using orthogonal estimating equations as in \cite{BCH-PLM} or \cite{DML}.  We do not pursue this direction further in this paper for brevity.}

The `ZB' method does not achieve 95\% coverage for the regression coefficient $\zeta_{0,1}$ in any of the simulation designs considered here (with coverages ranging from 73\% to 83\%).  The ZB method gives better coverage probabilities for the individual treatment effect with near or above 95\% coverage in all simulation designs.  The lengths of the ZB confidence intervals grow considerably with the underlying value of $s_0$.  For instance, in the $p=202, s_0=4$ case, the mean ZB interval length is 3.41 while the `True' mean interval length of 0.88;  in the $p=202, s_0=16$ case, the mean ZB interval length is 56.53 while the `True' mean interval length of 1.41.  

We now look at intervals constructed using the targeted undersmoothing approach.  Note that we take the initial model to be that underlying PL in these simulations, and, for point estimation, one could use these PL point estimates.  The point of targeted undersmoothing is to provide valid inferential statements allowing for model selection mistakes in producing this initial model and corresponding point estimates.  An interesting feature of the presented simulations is that TU(1) achieves nearly correct coverage uniformly across the simulation designs - achieving higher than 90\% coverage in every design.  While not reported in the table, we also have that TU(2) achieves higher than 95\% coverages in all cases.  We do see the inherent conservativeness in sensitivity analysis considering a large class of models in that TU(10) uniformly has coverage greater than 95\%, with coverage of 100\% in most cases.  Importantly, the good coverage properties are uniform across all designs and all parameters considered.  Unsurprisingly, this robustness comes with a cost.  As must be the case, the intervals produced by the targeted undersmoothing approach are relatively wide and become wider as one allows for more selection mistakes.  However, the losses relative to the infeasible optimum are modest for small $\bar{s}$ and that the intervals are still potentially informative even in the most extreme case we consider.  

Overall, we believe these results are favorable to the targeted undersmoothing approach.  Of the considered feasible alternatives, it is the only procedure that produces uniformly good coverage properties, at the cost of increased imprecision about what conclusions can be drawn from the data.  This increase in imprecision seems honest as it reflects the potential for substantive biases resulting from model selection mistakes.  The procedure is also anchored on initial point estimates that have relatively good properties for estimating the parameters of interest.   

\section{Conclusion}

In this paper, we have considered post model selection inference for a large class of functionals of the underlying model.  Our procedure provides valid confidence sets while handling the possibility that a misspecified model was selected. We show that these methods perform well in a simulation study.  We illustrate their use in estimating the profit differential for a fixed coupon-mailing strategy and in estimating heterogeneous treatment effects in data from a job training experiment.

\section*{Appendix 1.  Implementation Details}

This appendix describes the model selection procedure implemented in several sections of the paper.  Recall that the general model estimated is given by

$$y_i =  x_i' \beta_0 +( d_i \cdot x_i)' \gamma_0 + \varepsilon_i.$$

The procedure for selecting $\hat S^0$ is as follows.

\

\noindent \textit{Algorithm A1.} Initial model selection in heterogeneous effects linear model.

\noindent \textbf{Step 1.} Divide the sample into two sets:  $A_0 = \{ i : d_i = 0\}$ and $A_1 = \{ i: d_i = 1 \}$.

\noindent \textbf{Step 2.} Within each sample, demean the observations.

\noindent \textbf{Step 3.} Using the demeaned observations, run the modified heteroskedastic lasso regression (described below in Algorithm 2) of $y_i$ on $x_i$ over subset $A_0$ and let $\hat S^{0,0}$ be the set of covariates selected. Again using the demeaned observations, run the modified heteroskedastic lasso regression of $y_i$ on $x_i$ over subset $A_1$ and let $\hat S^{0,1}$ be the set of covariates selected. 

\noindent \textbf{Step 4.} 
The final model $\hat S^0$ consists of the constant term, the main effect of $d_i$, the $\beta_0$ components corresponding to covariate indexes in $\hat S^{0,0} \cup \hat S^{0,1}$, and the interaction terms ($\gamma_0$ terms) corresponding to covariate indexes in $ \hat S^{0,0} \cup \hat S^{0,1}$.

\

\noindent \textit{Algorithm A2.} Modified Heteroskedastic Lasso:  Marginal Correlation-Based Initial Penalty Loadings.
 The modified heteroskedastic lasso is identical to \cite{BellChenChernHans:nonGauss}  with a small modification.  \cite{BellChenChernHans:nonGauss}  relies on `initial penalty loadings,' which require initial estimates of individual specific residuals.  To obtain initial estimates of residuals, $e_i^{initial}$, we regress $y_i$ on the 5 covariates with the highest marginal correlation with $y_i$ and use the resulting residuals.  This approach can be shown to be formally valid when the number of covariates with high marginal correlations to $y_i$ used is bounded by a constant which does not depend on $n$.  In contrast, note that \cite{BellChenChernHans:nonGauss} suggest $e_i^{initial} = y_i - \bar y$.  Finally, the penalty loadings are updated with one iteration as described in \cite{BellChenChernHans:nonGauss}.

\bibliographystyle{plain}
\bibliography{dkbib1}

\pagebreak 
\begin{center} 
\begin{table}[H]   

{\small \textsc{Table Sim1.} Simulation Results: $n=400$, $p = 202$, $s_0 = 4$ } 

\vspace{.05in} 

\begin{tabular*}{\textwidth}{p{1.7cm} p{.9cm}  p{.8cm} p{.8cm} p{.8cm} p{.8cm} p{.8cm}  p{.8cm}  p{.8cm} p{.8cm}}

\hline          \hline                                                                  \\ 
&	\ True \		& All		&	Double	&	Lasso	&	PL & LCV	&	ZB& TU(1)	&	TU(10) 	\\
\cline{2-10}     & \multicolumn{9}{c}{A. RegCoef }\\ \cline{2-10} 
Bias	&	0.04		&	0.05	&	0.09	&	-0.13	&	-0.19	&	-0.33	&	 	&	 	\\
Std. Dev.	&	0.68		&	0.79	&	0.62	&	0.11	&	0.37	&	0.58	&	 	&	 	\\
RMSE	&	0.68		&	0.79	&	0.63	&	0.16	&	0.41	&	0.66	&	 	&	 	\\
Coverage	&	0.91		&	0.91	&	0.93	&	0.14	&	0.10	&	0.54	&	0.76 & 0.93	&	0.97	\\
Int. Length	&	2.46		&	2.70	&	2.26	&	0.28	&	0.33	&	1.42	&	0.98 & 1.97	&	3.86	\\
\cline{2-10}     & \multicolumn{9}{c}{B. TE }\\ \cline{2-10} 
Bias	&	0.01		&	-0.00	&	 	&	0.27	&	-0.01	&	-0.00	&	 	&	 	\\
Std. Dev.	&	0.24		&	1.57	&	 	&	0.15	&	0.30	&	0.35	&	 	&	 	\\
RMSE	&	0.25		&	1.57	&	 	&	0.31	&	0.30	&	0.35	&	 	&	 	\\
Coverage	&	0.91		&	0.94	&	 	&	0.56	&	0.76	&	0.94	& 0.95&	0.98	&	1.00	\\
Int. Length	&	0.88		&	5.74	&	 	&	0.67	&	0.65	&	1.49	&3.41&	1.76	&	5.44	\\
\cline{2-10}     & \multicolumn{9}{c}{C. PI }\\ \cline{2-10} 
Bias	&	0.01		&	0.32	&	 	&	-0.14	&	-0.01	&	-0.05	&	 	&	 	\\
Std. Dev.	&	0.06		&	0.07	&	 	&	0.01	&	0.08	&	0.06	&	 	&	 	\\
RMSE	&	0.06		&	0.33	&	 	&	0.14	&	0.08	&	0.08	&	 	&	 	\\
Coverage	&	0.95		&	0.00	&	 	&	0.06	&	0.81	&	0.82	& &	0.94	&	1.00	\\
Int. Length	&	0.26		&	0.27	&	 	&	0.02	&	0.22	&	0.22	&	& 0.30	&	0.45	\\
\hline
\end{tabular*}													
\vspace{.1in}													
\end{table}		
\end{center}	
\center {\small \textsc{Fig Sim1.} Simulation Results: $n=400$, $p = 202$, $s_0 = 4$ } 
\smallskip
\hrule
\textcolor{white}{\footnotesize ,}
\hrule
\center \hspace{-1.5cm}\includegraphics[scale=.30]{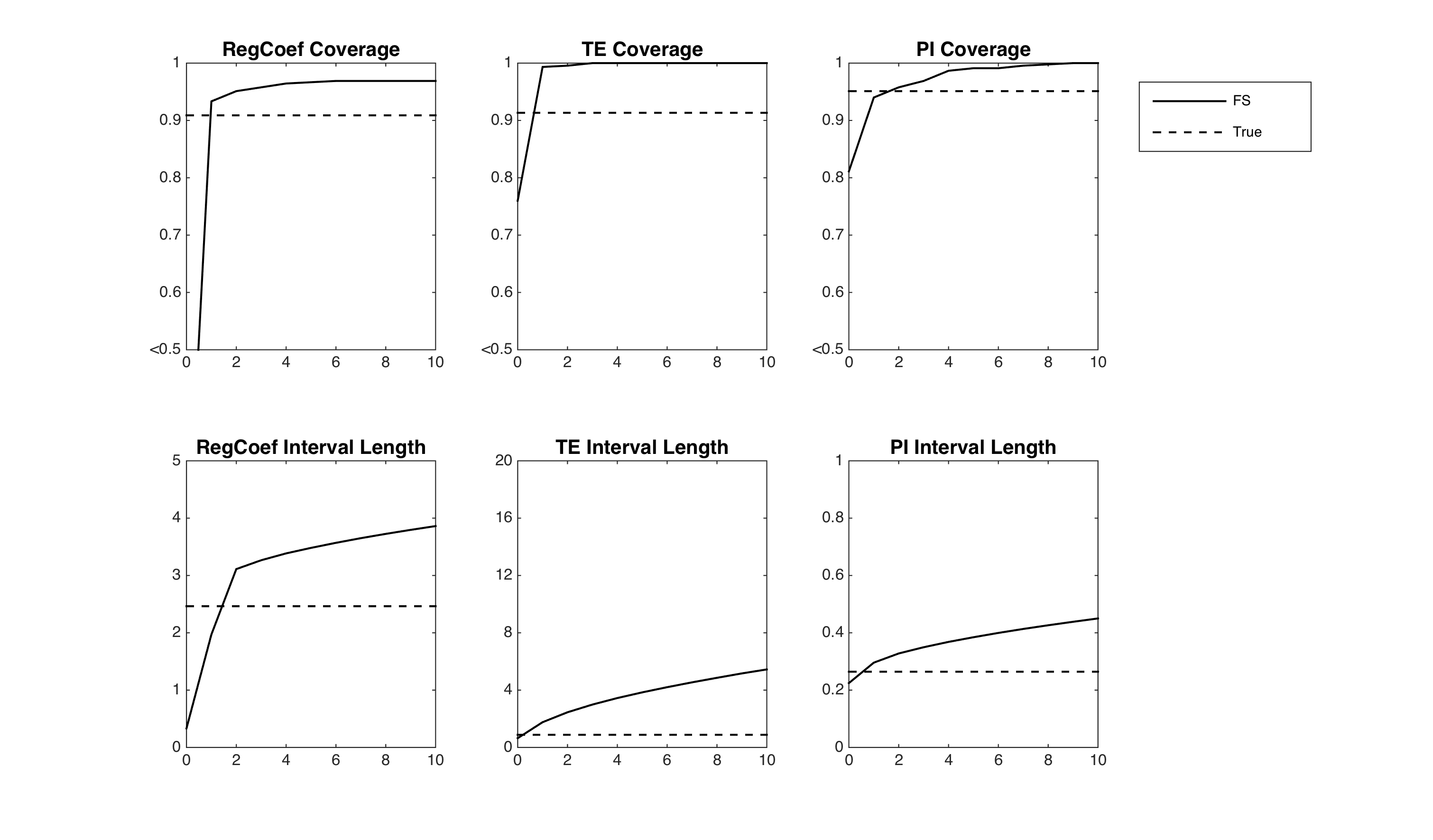}

\

\hrule

\begin{center} 
\begin{table}[H]   

{\small \textsc{Table Sim2.} Simulation Results: $n=400$, $p = 202$, $s_0 = 8$ } 

\vspace{.05in} 

\begin{tabular*}{\textwidth}{p{1.7cm} p{.9cm}  p{.8cm} p{.8cm} p{.8cm} p{.8cm} p{.8cm}  p{.8cm}   p{.8cm}  p{.8cm} }

\hline          \hline                                                                  \\ 
&	\ True \		& All		&	Double	&	Lasso	&	PL & LCV	& ZB&	TU(1)	&	TU(10) 	\\
\cline{2-10}     & \multicolumn{9}{c}{A. RegCoef }\\ \cline{2-10} 
Bias	&	0.04		&	0.01	&	0.84	&	-0.09	&	-0.08	&	-0.14	&	 	&	 	\\
Std. Dev.	&	0.63		&	0.74	&	0.67	&	0.02	&	0.18	&	0.55	&	 	&	 	\\
RMSE	&	0.63		&	0.74	&	1.07	&	0.09	&	0.19	&	0.57	&	 	&	 	\\
Coverage	&	0.94		&	0.92	&	0.67	&	0.02	&	0.01	&	0.64	& 0.79&	0.99	&	1.00	\\
Int. Length	&	2.25		&	2.61	&	2.33	&	0.04	&	0.03	&	1.51	&1.22&	2.12	&	4.25	\\
\cline{2-10}     & \multicolumn{9}{c}{B. TE }\\ \cline{2-10} 
Bias	&	0.02		&	0.01	&	 	&	0.12	&	0.13	&	0.13	&	 	&	 	\\
Std. Dev.	&	0.21		&	1.57	&	 	&	0.12	&	0.27	&	0.45	&	 	&	 	\\
RMSE	&	0.21		&	1.57	&	 	&	0.17	&	0.30	&	0.47	&	 	&	 	\\
Coverage	&	0.94		&	0.92	&	 	&	0.87	&	0.76	&	0.97	& 0.91&	0.99	&	1.00	\\
Int. Length	&	0.78		&	5.79	&	 	&	0.56	&	0.58	&	1.88	& 19.57&	2.16	&	6.72	\\
\cline{2-10}     & \multicolumn{9}{c}{C. PI }\\ \cline{2-10} 
Bias	&	0.02		&	0.31	&	 	&	-0.09	&	-0.07	&	-0.02	&	 	&	 	\\
Std. Dev.	&	0.10		&	0.10	&	 	&	0.11	&	0.11	&	0.11	&	 	&	 	\\
RMSE	&	0.10		&	0.33	&	 	&	0.14	&	0.13	&	0.11	&	 	&	 	\\
Coverage	&	0.95		&	0.06	&	 	&	0.86	&	0.87	&	0.90	&	& 0.95	&	1.00	\\
Int. Length	&	0.40		&	0.36	&	 	&	0.44	&	0.43	&	0.39	&	& 0.50	&	0.74	\\
\hline
\end{tabular*}													
\vspace{.1in}													
\end{table}		
\end{center}	
\center {\small \textsc{Fig Sim2.} Simulation Results: $n=400$, $p = 202$, $s_0 = 8$ }
\smallskip 
\hrule
\textcolor{white}{\footnotesize ,}
\hrule 

\center \hspace{-1.5cm}\includegraphics[scale=.30]{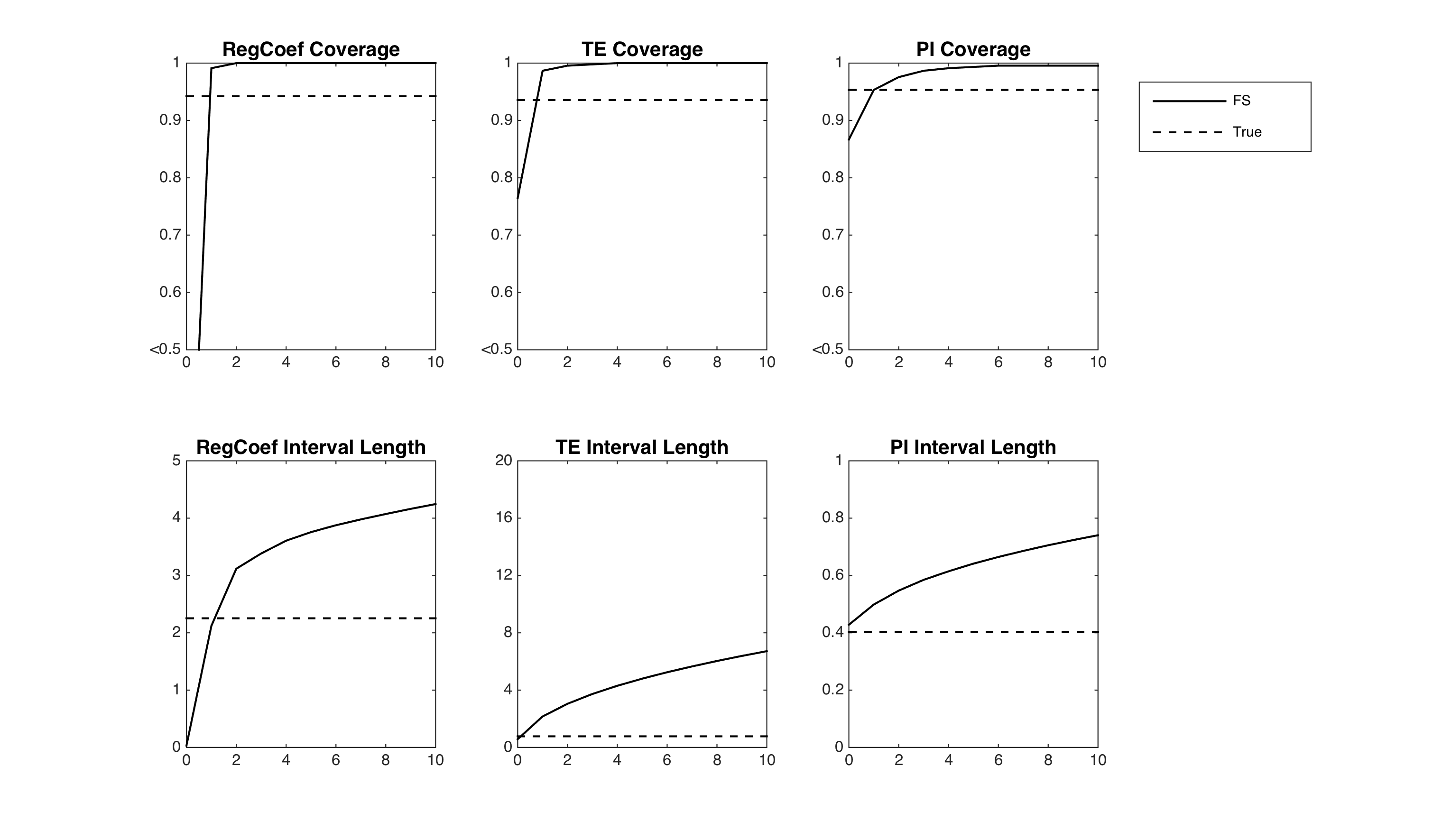}

\

\hrule

\begin{center} 
\begin{table}[H]   

\center {\small \textsc{Table Sim3.} Simulation Results: $n=400$, $p = 202$, $s_0 = 16$ } 

\vspace{.05in} 

\begin{tabular*}{\textwidth}{p{1.7cm} p{.9cm}  p{.8cm} p{.8cm} p{.8cm} p{.8cm} p{.8cm}  p{.8cm}   p{.8cm}  p{.8cm} }

\hline          \hline                                                                  \\ 
&	\ True \		& All		&	Double	&	Lasso	&	PL & LCV	&	ZB &  TU(1)	&	TU(10) 	\\
\cline{2-10}     & \multicolumn{9}{c}{A. RegCoef }\\ \cline{2-10} 
Bias	&	0.05		&	0.04	&	0.46	&	-0.07	&	-0.07	&	-0.12	&	 	&	 	\\
Std. Dev.	&	0.57		&	0.71	&	0.56	&	0.00	&	0.04	&	0.37	&	 	&	 	\\
RMSE	&	0.58		&	0.71	&	0.73	&	0.07	&	0.08	&	0.39	&	 	&	 	\\
Coverage	&	0.92		&	0.92	&	0.82	&	0.00	&	0.00	&	0.47	&	0.83 & 0.99	&	1.00	\\
Int. Length	&	2.04		&	2.44	&	2.05	&	0.00	&	0.01	&	0.85	& 1.50 &	1.36	&	3.86	\\
\cline{2-10}     & \multicolumn{9}{c}{B. TE }\\ \cline{2-10} 
Bias	&	0.03		&	-0.04	&	 	&	-0.38	&	-0.51	&	-0.52	&	 &	&	 	\\
Std. Dev.	&	0.41		&	1.60	&	 	&	0.15	&	0.32	&	0.45	&	& 	&	 	\\
RMSE	&	0.41		&	1.60	&	 	&	0.41	&	0.60	&	0.69	&	 &	&	 	\\
Coverage	&	0.91		&	0.92	&	 	&	0.26	&	0.14	&	0.73	& 0.92 &	0.91	&	1.00	\\
Int. Length	&	1.41		&	5.73	&	 	&	0.62	&	0.60	&	1.83	& 56.53 &	2.28	&	6.84	\\
\cline{2-10}     & \multicolumn{9}{c}{C. PI }\\ \cline{2-10} 
Bias	&	0.04		&	0.34	&	 	&	-0.12	&	-0.08	&	-0.03	&	 &	&	 	\\
Std. Dev.	&	0.06		&	0.08	&	 	&	0.01	&	0.06	&	0.07	&	& 	&	 	\\
RMSE	&	0.07		&	0.35	&	 	&	0.12	&	0.10	&	0.07	&	 &	&	 	\\
Coverage	&	0.94		&	0.00	&	 	&	0.07	&	0.44	&	0.74	&	& 0.94	&	1.00	\\
Int. Length	&	0.25		&	0.29	&	 	&	0.02	&	0.12	&	0.19	& &	0.32	&	0.54	\\
\hline
\end{tabular*}													
\vspace{.1in}													
\end{table}		
\end{center}	
\center {\small \textsc{Fig Sim3.} Simulation Results: $n=400$, $p = 202$, $s_0 = 16$ } 
\smallskip
\hrule
\textcolor{white}{\footnotesize ,}
\hrule

\center \hspace{-1.5cm}\includegraphics[scale=.30]{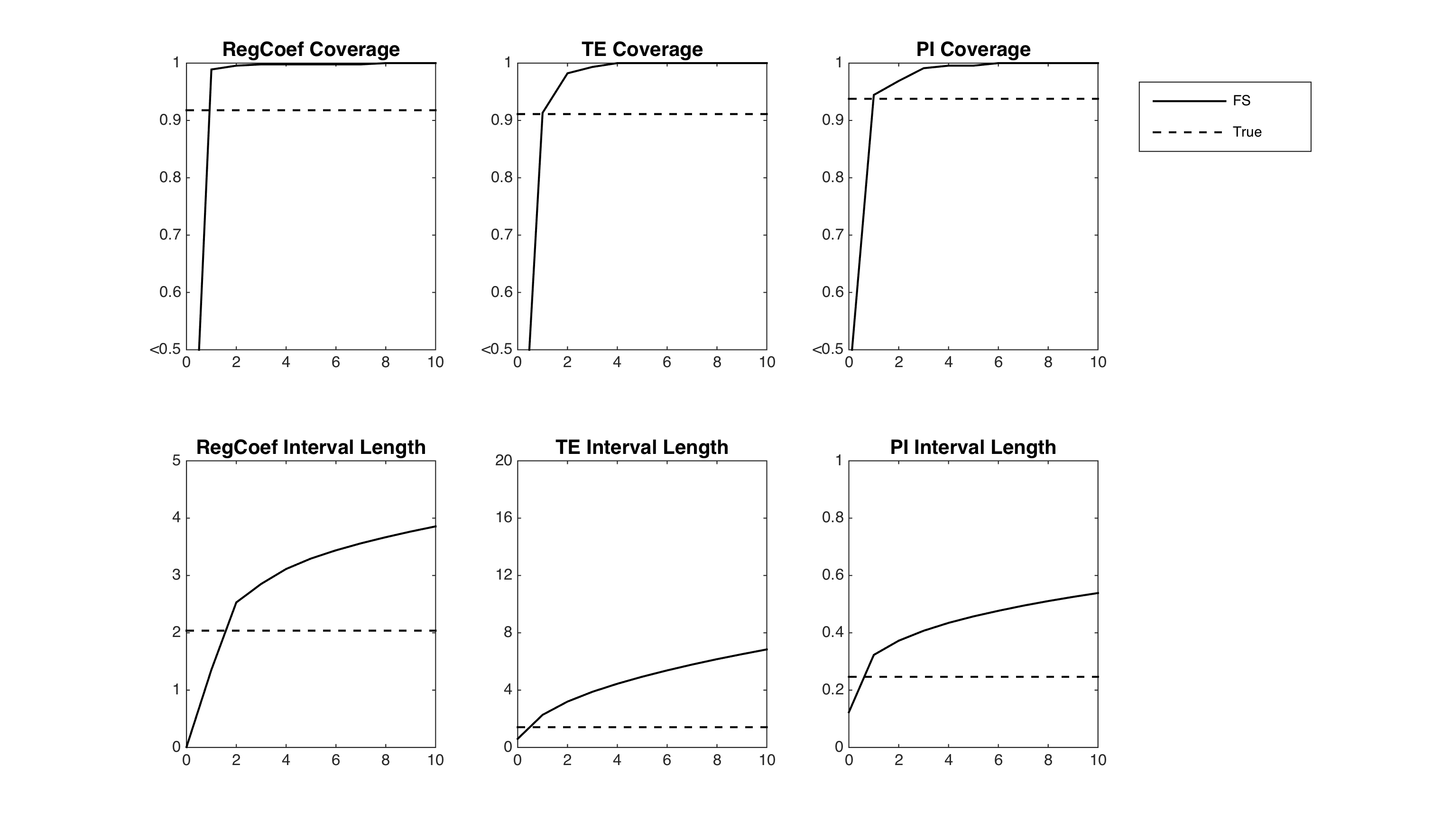}

\

\hrule

\begin{center} 
\begin{table}[H]   

\center {\small \textsc{Table Sim4.} Simulation Results: $n=400$, $p = 602$, $s_0 = 4$ } 

\vspace{.05in} 

\begin{tabular*}{\textwidth}{p{1.7cm} p{.9cm}  p{.8cm} p{.8cm} p{.8cm} p{.8cm} p{.8cm}  p{.8cm}   p{.8cm}  p{.8cm} }

\hline          \hline                                                                  \\ 
&	\ True \		& All		&	Double	&	Lasso	&	PL &LCV	& ZB&	TU(1)	&	TU(10) 	\\
\cline{2-10}     & \multicolumn{9}{c}{A. RegCoef }\\ \cline{2-10} 
Bias	&	-0.04		&	 	&	0.04	&	-0.12	&	-0.19	&	-0.44	&	 &	&	 	\\
Std. Dev.	&	0.69		&	 	&	0.64	&	0.12	&	0.38	&	0.47	&	 &	&	 	\\
RMSE	&	0.69		&	 	&	0.64	&	0.17	&	0.42	&	0.65	&	   &	&	 	\\
Coverage	&	0.92		&	 	&	0.92	&	0.12	&	0.08	&	0.42	& 0.76	&0.91	&	0.96	\\
Int. Length	&	2.43		&	 	&	2.22	&	0.23	&	0.28	&	1.09	&  1.01 &	1.86	&	4.21	\\
\cline{2-10}     & \multicolumn{9}{c}{B. TE }\\ \cline{2-10} 
Bias	&	-0.01		&	 	&	 	&	0.26	&	-0.03	&	-0.07	&	& 	&	 	\\
Std. Dev.	&	0.24		&	 	&	 	&	0.15	&	0.29	&	0.34	&  	& 	&	 	\\
RMSE	&	0.24		&	 	&	 	&	0.30	&	0.29	&	0.35	&   	& 	&	 	\\
Coverage	&	0.94		&	 	&	 	&	0.60	&	0.76	&	0.98	&  0.91 &	0.99	&	1.00	\\
Int. Length	&	0.87		&	 	&	 	&	0.67	&	0.63	&	1.86	&	2.21 & 2.12	&	7.92	\\
\cline{2-10}     & \multicolumn{9}{c}{C. PI }\\ \cline{2-10} 
Bias	&	0.00		&	 	&	 	&	-0.14	&	-0.02	&	-0.07	&	 & 	&	 	\\
Std. Dev.	&	0.07		&	 	&	 	&	0.01	&	0.08	&	0.06	&	& 	&	 	\\
RMSE	&	0.07		&	 	&	 	&	0.14	&	0.09	&	0.09	&	& 	&	 	\\
Coverage	&	0.94		&	 	&	 	&	0.04	&	0.77	&	0.72	&	& 0.92	&	1.00	\\
Int. Length	&	0.26		&	 	&	 	&	0.02	&	0.21	&	0.21	&	& 0.30	&	0.52	\\
\hline
\end{tabular*}													
\vspace{.1in}													
\end{table}		
\end{center}	

\center {\small \textsc{Fig Sim4.} Simulation Results: $n=400$, $p = 602$, $s_0 = 4$ } 
\smallskip
\hrule
\textcolor{white}{\footnotesize ,}
\hrule

\center  \hspace{-1.5cm}\includegraphics[scale=.30]{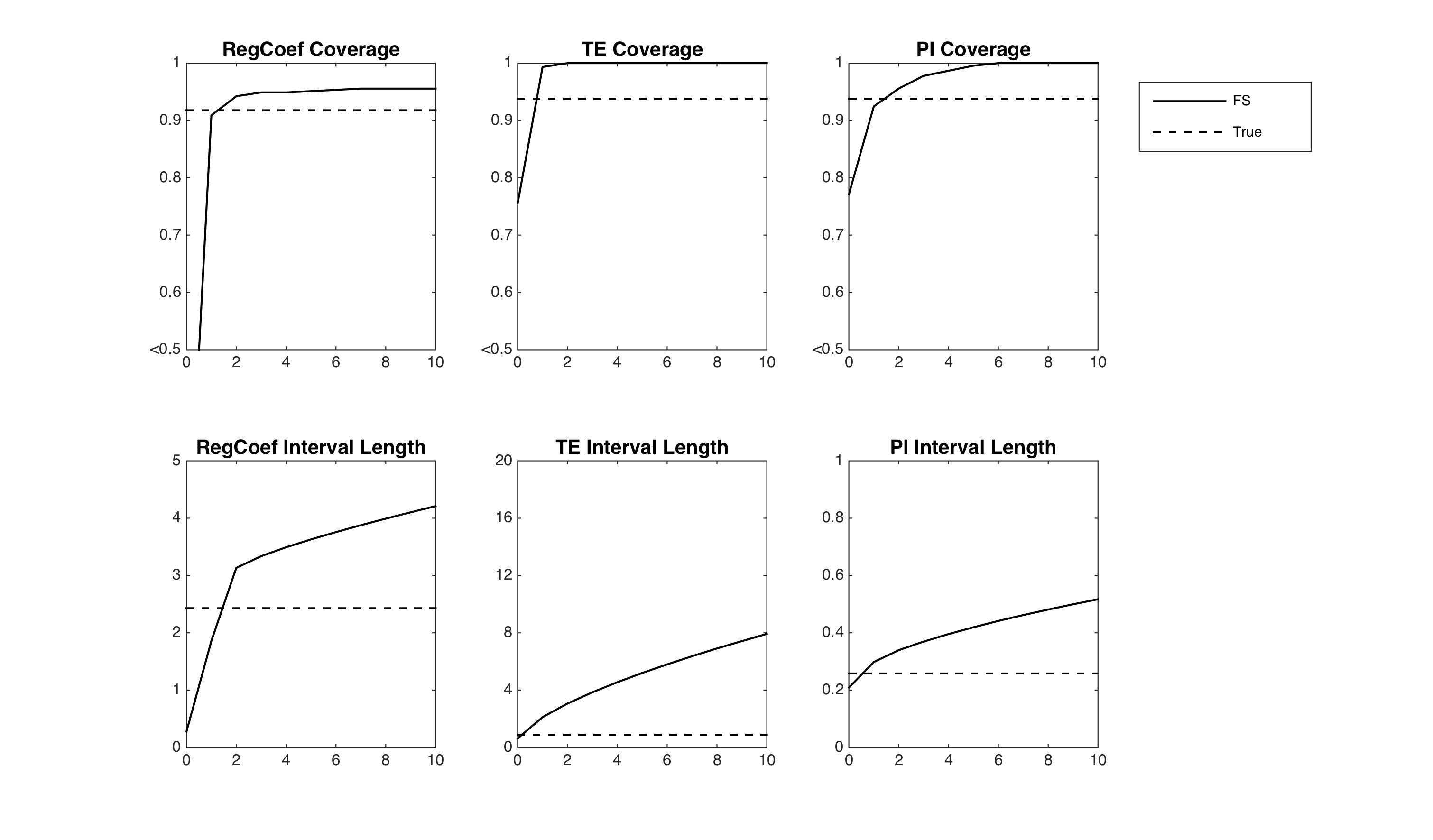}

\

\hrule

\begin{center} 
\begin{table}[H]   

{\small \textsc{Table Sim5.} Simulation Results: $n=400$, $p=602$, $s_0=8$  } 

\vspace{.05in} 

\begin{tabular*}{\textwidth}{p{1.7cm} p{.9cm}  p{.8cm} p{.8cm} p{.8cm} p{.8cm} p{.8cm}  p{.8cm}  p{.8cm}   p{.8cm} }

\hline          \hline                                                                  \\ 
&	\ True \		& All		&	Double	&	Lasso	&	PL & LCV	& ZB &	TU(1)	&	TU(10) 	\\
\cline{2-10}     & \multicolumn{9}{c}{A. RegCoef }\\ \cline{2-10} 
Bias	&	-0.03		&	 	&	0.78	&	-0.09	&	-0.08	&	-0.15	&	& 	&	 	\\
Std. Dev.	&	0.65		&	 	&	0.68	&	0.01	&	0.12	&	0.43	&	& 	&	 	\\
RMSE	&	0.65		&	 	&	1.03	&	0.09	&	0.15	&	0.45	&	& 	&	 	\\
Coverage	&	0.93		&	 	&	0.72	&	0.02	&	0.01	&	0.55	& 0.77	&1.00	&	1.00	\\
Int. Length	&	2.25		&	 	&	2.34	&	0.02	&	0.03	&	1.13	& 1.26	&2.04	&	4.68	\\
\cline{2-10}     & \multicolumn{9}{c}{B. TE }\\ \cline{2-10} 
Bias	&	-0.01		&	 	&	 	&	0.11	&	0.13	&	0.15	&	 &	&	 	\\
Std. Dev.	&	0.22		&	 	&	 	&	0.12	&	0.25	&	0.44	&	& 	&	 	\\
RMSE	&	0.22		&	 	&	 	&	0.17	&	0.28	&	0.46	&	& 	&	 	\\
Coverage	&	0.93		&	 	&	 	&	0.87	&	0.74	&	0.99	&	0.98 &1.00	&	1.00	\\
Int. Length	&	0.77		&	 	&	 	&	0.56	&	0.58	&	2.27	& 24.82 	&2.66	&	9.98	\\
\cline{2-10}     & \multicolumn{9}{c}{C. PI }\\ \cline{2-10} 
Bias	&	0.01		&	 	&	 	&	-0.09	&	-0.08	&	-0.04	&	 &	&	 	\\
Std. Dev.	&	0.10		&	 	&	 	&	0.11	&	0.11	&	0.11	&	& 	&	 	\\
RMSE	&	0.10		&	 	&	 	&	0.15	&	0.13	&	0.12	&	 &	&	 	\\
Coverage	&	0.95		&	 	&	 	&	0.85	&	0.88	&	0.89	&	&0.95	&	1.00	\\
Int. Length	&	0.40		&	 	&	 	&	0.44	&	0.43	&	0.40	&	 &0.51	&	0.85	\\
\hline
\end{tabular*}													
\vspace{.1in}													
\end{table}		
\end{center}	
\center {\small \textsc{Fig Sim5.} Simulation Results: $n=400$, $p = 602$, $s_0 = 8$ } 
\smallskip
\hrule
\textcolor{white}{\footnotesize ,}
\hrule

\center \hspace{-1.5cm}\includegraphics[scale=.30]{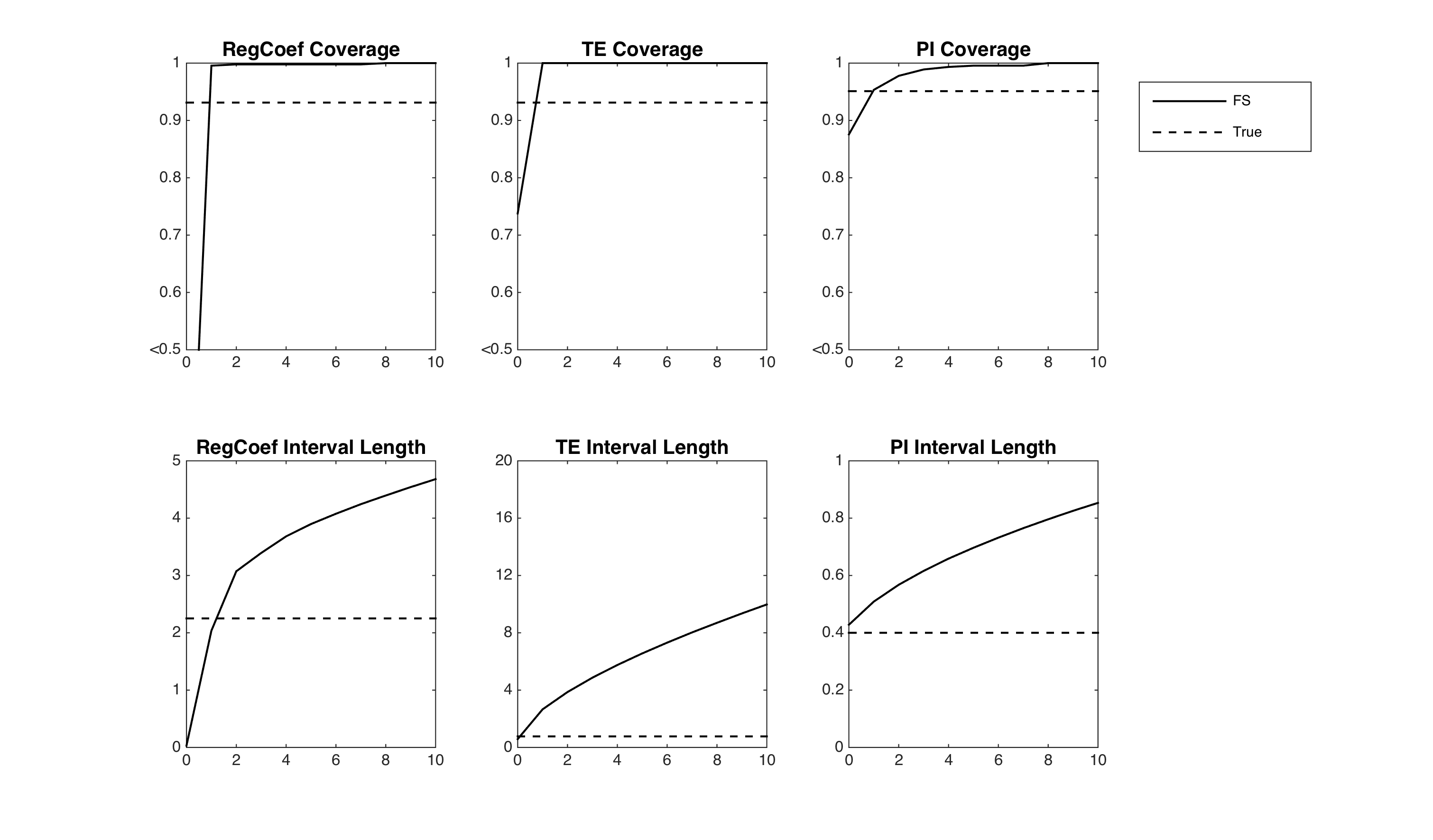}

\

\hrule

\begin{center}
\begin{table}[H]   
{\small \textsc{Table Sim6.} Simulation Results: $n=400$, $p=602$, $s_0=16$  } 

\vspace{.05in} 

\begin{tabular*}{\textwidth}{p{1.7cm} p{.9cm}  p{.8cm} p{.8cm} p{.8cm} p{.8cm} p{.8cm}  p{.8cm}  p{.8cm} p{.8cm} }
\hline          \hline                                                                  \\ 
&	\ True \		& All		&	Double	&	Lasso	&	PL & LCV	&	ZB & TU(1)	&TU(10) 	\\
\cline{2-10}     & \multicolumn{9}{c}{A. RegCoef }\\ \cline{2-10} 
Bias	&	-0.02		&	 	&	0.40	&	-0.06	&	-0.06	&	-0.10	&	 	&	 &	\\
Std. Dev.	&	0.55		&	 	&	0.54	&	0.00	&	0.00	&	0.22	&	 	&	& 	\\
RMSE	&	0.55		&	 	&	0.67	&	0.06	&	0.06	&	0.24	&	 	&	 &	\\
Coverage	&	0.94		&	 	&	0.88	&	0.00	&	0.00	&	0.35	&0.73 &	0.98	&	0.99	\\
Int. Length	&	2.01		&	 	&	2.04	&	0.00	&	0.00	&	0.50	&1.60&	1.33	&	4.15	\\
\cline{2-10}     & \multicolumn{9}{c}{B. TE }\\ \cline{2-10} 
Bias	&	0.00		&	 	&	 	&	-0.38	&	-0.51	&	-0.70	&	 &	&	 	\\
Std. Dev.	&	0.39		&	 	&	 	&	0.14	&	0.30	&	0.41	&	& 	&	 	\\
RMSE	&	0.39		&	 	&	 	&	0.41	&	0.60	&	0.81	&	 &	&	 	\\
Coverage	&	0.93		&		&	 	&	0.26	&	0.17	&	0.77	&	0.97 &0.94	&	1.00	\\
Int. Length	&	1.36		&	 	&	 	&	0.63	&	0.60	&	2.19	&	71.83 &2.76	&	9.97	\\
\cline{2-10}     & \multicolumn{9}{c}{C. PI }\\ \cline{2-10} 
Bias	&	0.04		&	 	&	 	&	-0.12	&	-0.08	&	-0.06	&	 &	&	 	\\
Std. Dev.	&	0.06		&	 	&	 	&	0.01	&	0.05	&	0.06	&	& 	&	 	\\
RMSE	&	0.07		&	 	&	 	&	0.12	&	0.10	&	0.08	&	 &	&	 	\\
Coverage	&	0.93		&		&	 	&	0.05	&	0.40	&	0.63	&	&0.93	&	1.00	\\
Int. Length	&	0.24		&	 	&	 	&	0.02	&	0.12	&	0.19	&	&0.33	&	0.61	\\
\hline
\end{tabular*}													
\vspace{.1in}														
\end{table}		
\end{center}	
\center {\small \textsc{Fig Sim6.} Simulation Results: $n=400$, $p = 602$, $s_0 = 16$ } 
\smallskip
\hrule
\textcolor{white}{\footnotesize ,}
\hrule

\center \hspace{-1.5cm}\includegraphics[scale=.30]{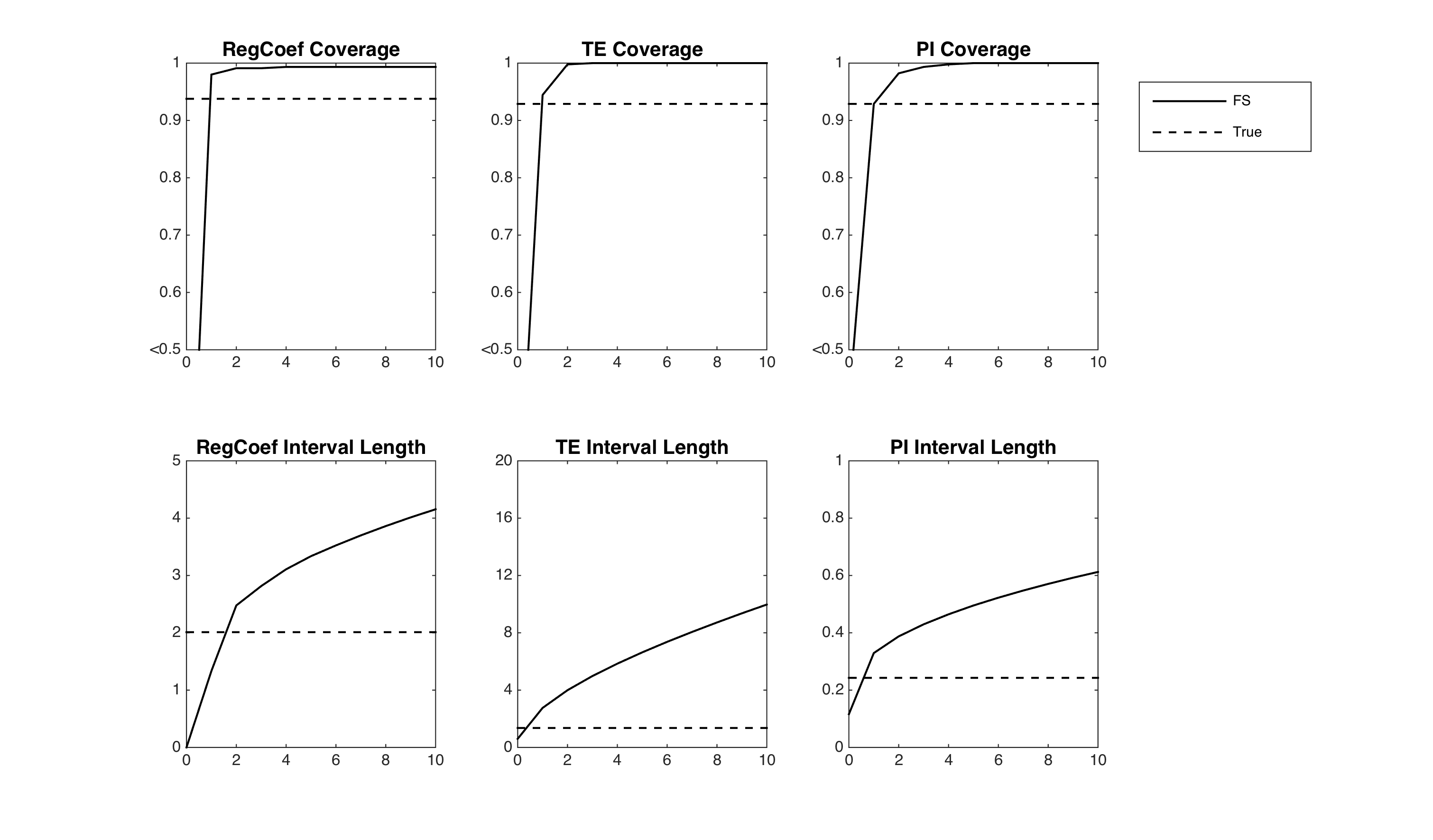}

\

\hrule

\end{document}